\newcommand\cyr{%
\renewcommand\rmdefault{wncyr}%
\renewcommand\sfdefault{wncyss}
\renewcommand\encodingdefault{OT2}%
\normalfont
\selectfont}
\DeclareTextFontCommand{\textcyr}{\cyr}
\DeclareFontFamily{OT1}{rsfs}{}	
\DeclareFontShape{OT1}{rsfs}{n}{it}{<-> rsfs10}{}
\DeclareMathAlphabet{\fmathscr}{OT1}{rsfs}{n}{it}
\numberwithin{equation}{section}
\newtheorem{Theoremx}{Theorem}
\newtheorem{theorem}{Theorem}[section]
\newtheorem*{maintheorem*}{Main Theorem}
\newtheorem{lemma}[theorem]{Lemma}
\newtheorem{proposition}[theorem]{Proposition}
\theoremstyle{definition}
\newtheorem{definition}[theorem]{Definition}
\theoremstyle{remark}
\newcommand{\Spec}{\operatorname{Spec}}
\newcommand{\Ht}{\operatorname{ht}}
\newcommand{\Height}{\operatorname{ht}}
\newcommand{\Ext}{\operatorname{Ext}}
\newcommand{\Tor}{\operatorname{Tor}}
\newcommand{\Hom}{\operatorname{Hom}}
\newcommand{\depth}{\operatorname{depth}}
\newcommand{\un}{\operatorname{un}}
\newcommand{\Q}{\mathbb{Q}}
\newcommand{\fm}{\mathfrak{m}}
\newcommand{\fp}{\mathfrak{p}}
\newcommand{\fq}{\mathfrak{q}}
\newcommand{\cP}{\mathcal{P}}
\begin{document}

\title{$F$-purity deforms in $\Q$-Gorenstein rings}

\author[Thomas Polstra]{Thomas Polstra}
\address{Department of Mathematics, University of Alabama, Tuscaloosa, AL 35487}
\thanks{Polstra was supported in part by NSF Grant DMS \#2101890 and by a grant from the Simons Foundation, Grant \#814268, MSRI.}
\email{tmpolstra@ua.edu}
\urladdr{\url{https://thomaspolstra.github.io/}}

\author[Austyn Simpson]{Austyn Simpson}
\thanks{Simpson was supported by NSF Grant DMS \#2202890.}
\address{Department of Mathematics, University of Michigan, Ann Arbor, MI 48109}
\email{austyn@umich.edu}

\begin{abstract}
We show that $F$-purity deforms in local $\Q$-Gorenstein rings of prime characteristic $p>0$. Furthermore, we show that $F$-purity is $\fm$-adically stable in local Cohen-Macaulay $\Q$-Gorenstein rings.
\end{abstract}

\maketitle

\section{Introduction}
Let $(R,\fm,k)$ be a Noetherian local ring of prime characteristic $p>0$, and let $F:R\rightarrow R$ be the Frobenius endomorphism $r\mapsto r^p$. This article is concerned with \emph{$F$-pure} rings, i.e. rings for which the Frobenius is a pure morphism. The notion of $F$-purity is a central pillar in the vast and expanding web of Frobenius singularities emerging from the celebrated works of Hochster-Roberts and Hochster-Huneke on rings of invariants \cite{HR74,HR76} and tight closure \cite{HH90}, respectively. Some of these so-called $F$-singularities include $F$-regular, $F$-rational, and $F$-injective which all are defined (with the foundational work of Kunz \cite{Kun69} as motivation) by some weakening of the flatness of Frobenius. Although fascinating from a purely commutative algebraic viewpoint, these classes of mild singularities have garnered widespread attention in the past few decades due to their connections to singularities in complex birational geometry \cite{BST17,HW02,Sch09b,Smi97,ST17} and (recently) to those in mixed characteristic \cite{MS21,MSTWW22,BMPSTWW20}.

A natural question to consider of any class of singularities is whether they deform. Recall that a property $\cP$ of local rings is said to \emph{deform} if given a local ring $(R,\fm,k)$ and a non-zero-divisor $f\in\fm$ such that $R/(f)$ is $\cP$, $R$ is $\cP$ too. Deformation holds for many familiar local properties\footnote{see \cite[Table 2]{Mur22} for a summary} and can be viewed as a weak version of \emph{inversion of adjunction}.

The study of this phenomenon in the realm of $F$-singularities has a long history dating back to \cite{Fed83} wherein Fedder initiated the exploration of whether $F$-injectivity deforms, a question which remains open as of the present article. It was known early on that all four $F$-singularities deform if the ambient ring is Gorenstein \cite{Fed83,HH94}, but outside of this setting the situation is much more delicate (see Table \ref{summary-table} for a summary). The primary subtlety is that if one does not impose $\Q$-Gorenstein assumptions on the ambient ring, then there are counterexamples (due to Fedder \cite{Fed83} and Singh \cite{Sin99}) which show that $F$-purity (resp. $F$-regularity) do not deform. This obstruction has a counterpart in the characteristic zero minimal model program, namely that log canonical and log terminal singularities (the supposed analogues of $F$-purity and $F$-regularity respectively, evidenced by \cite{Har98, HW02, Her16, Tak13}) do not deform without additional $\Q$-Gorenstein hypotheses (see \cite[Example 9.1.7 \& Remark 9.1.15]{Ish18}). The main contribution of the present article is a complete resolution to the question of whether $F$-purity deforms in $F$-finite rings, which we answer affirmatively in the $\Q$-Gorenstein scenario.

\begin{Theoremx} (Theorem \ref{Theorem F-purity deforms})
\label{Main theorem F-purity}
Let $(R,\fm,k)$ be a local $F$-finite $\Q$-Gorenstein ring of prime characteristic $p>0$. Suppose that $f\in \fm$ is a non-zero-divisor such that $R/(f)$ is Gorenstein in codimension $1$, $(S_2)$, and $F$-pure. Then $R$ is $F$-pure.
\end{Theoremx}

The analogous statement regarding the deformation of $F$-regularity was obtained in \cite{AKM98} over two decades ago via tight closure techniques which by present standards are considered classical. Experts were aware that special cases of Theorem \ref{Main theorem F-purity} were achievable if the $\Q$-Gorenstein index of $R$ is coprime to the characteristic (see \cite[Remark 4.10(1)]{HW02} and \cite[Proposition 7.2]{Sch09a}), but the index-free version of the conjecture remained elusive until now.

We wish to emphasize that the history of this problem mirrors that of the deformation and inversion of adjunction problems for log terminal and log canonical singularities in the characteristic zero MMP in two ways: the failure already described above outside of the $\Q$-Gorenstein scenario, and in the jump in difficulty between the two methods. Log terminal singularities were seemingly known to satisfy inversion of adjunction in the program's infancy (see e.g. \cite[Theorem 5.50]{KM98}). By contrast, the analogous conjecture for log canonicity vexed researchers for several decades before Kawakita's solution in \cite{Kaw07}. The parallels that one may draw in the prime characteristic world thus unsurprisingly point to a higher degree of subtlety in the deformation problem for $F$-purity than that of $F$-regularity, which we believe is visible from our methods in Section \ref{Section F-purity deforms and is stable}. We employ a novel strategy involving cyclic covers to remove $p$-torsion of the canonical module, and we remark that this trick cannot be used to obtain a more general inversion of adjunction statement for $F$-pure pairs.

Shifting gears, a related problem that we conclude the article with is that of $\fm$-adic stability for $F$-purity. Following \cite{DSS20}, a property $\cP$ of local rings is said to be \emph{$\fm$-adically stable} if given a local ring $(R,\fm,k)$ and a non-zero-divisor $f\in \fm$ such that $R/(f)$ is $\cP$, there exists an integer $N>0$ so that for every $\epsilon\in\fm^N$, $R/(f+\epsilon)$ is also $\cP$. This notion of stability is related to both deformation and to finite determinacy, has early roots in \cite{Eis74,Hir65,Sam56}, and has sparked a flurry of recent activity \cite{DSS20,Dua22,MQS20,PS18,QT21}. Provided that $\cP$ descends along faithfully flat maps and that $\cP$ passes from $R$ to $R[x]_{(\fm,x)}$ then stability of $\cP$ implies deformation of $\cP$ \cite[Theorem 2.4]{DSS20}.  \emph{Op. cit.} initiates the study of $\fm$-adic stability for $F$-singularities, and it is shown that stability holds (or fails) in many of the same instances that deformation does (see Table \ref{summary-table}). We bolster this program by showing that $F$-purity is $\fm$-adically stable in $\Q$-Gorenstein \emph{Cohen-Macaulay} rings.

\begin{Theoremx} (Theorem \ref{Theorem F-purity deforms Q-Gorenstein index p^e_0})
Let $(R,\fm,k)$ be a local $F$-finite Cohen-Macaulay $\Q$-Gorenstein ring of prime characteristic $p>0$. Suppose that $f\in \fm$ is a non-zero-divisor such that $R/(f)$ is Gorenstein in codimension $1$, and $F$-pure. Then there exists $N>0$ so that $R/(f+\epsilon)$ is $F$-pure for every $\epsilon\in\fm^N$.
\end{Theoremx}

\subsection{Notation, Conventions, and organization of the paper}

All rings in this article are commutative with unit and Noetherian. Typically, our interests lie in the development of the theory of prime characteristic rings, especially in Section~\ref{Section F-purity deforms and is stable}. If $R$ is of prime characteristic $p>0$, then we denote by $F:R\to R$ the Frobenius endomorphism. If $I\subseteq R$ is an ideal, for each $e\in\mathbb{N}$ we denote by $I^{[p^e]}$ the ideal $\langle r^{p^e}\mid r\in I\rangle$. For each $e\in\mathbb{N}$ and $R$-module $M$ we let $F^e_*M$ denote the $R$-module obtained from $M$ via restriction of scalars along $F^e$. That is, $F^e_*M$ agrees with $M$ as an abelian group, and given $r\in R$ and $m\in M$ we have $r\cdot F^e_*(m)=F^e_*(r^{p^e} m)$ where $F^e_*(m)$ is the element of $F^e_*M$ corresponding to $m$. Often times we assume that $R$ is \emph{$F$-finite}, i.e. if $M$ is a finitely generated $R$-module, so too is $F^e_* M$ for each $e\in\mathbb{N}$.

We often assume that a local ring is the homomorphic image of a regular local ring, an assumption that is satisfied by every $F$-finite local ring, \cite[Remark~13.6]{Gab04}. If $R\cong S/I$ where $S$ is a regular local ring then $\Ext^{\Height(I)}_S(R,S)$ is a choice of canonical module of $R$. We caution the reader that we do \emph{not} require $\Q$-Gorenstein rings to be Cohen-Macaulay or normal; see Definition \ref{Definition Q-gor} for a precise definition.  

In the study of $\fm$-adic stability, we occasionally use the shorthand ``$R/(f+\epsilon)$ satisfies property $\cP$ for all $\epsilon\in\fm^{N\gg 0}$" to mean that there exists an integer $N>0$ so that $R/(f+\epsilon)$ is $\cP$ for all $\epsilon\in\fm^N$.

\section{Preliminary Results}
\label{Section Preliminary}

\subsection{Generalized divisors, divisorial ideals, and cyclic covers}

A ring $R$ is $(G_1)$ if $R$ is Gorenstein in codimension $1$. Suppose that $(R,\fm,k)$ is a local $(G_1)$ ring satisfying Serre's condition $(S_2)$, and let $L$ denote the total ring of fractions of $R$. Following \cite{Har94} there is a well-defined notion of linear equivalence  on the collection of divisors which are Cartier in codimension $1$, and hence there is a well-defined additive structure on such divisors up to linear equivalence. If $D$ is a Weil divisor which is Cartier in codimension $1$ then we let $R(D)$ denote the corresponding divisorial ideal of $R$, i.e.
\[
R(D)=\{x\in L\setminus\{0\}\mid \mbox{div}(x)+D\geq 0\}\cup\{0\}.
\]
In other words, $R(D)$ denotes the global sections of $\mathcal{O}_{\Spec R}(D)$. We assume that $R$ admits a dualizing complex and thus admits a canonical module $\omega_R$. Under our assumptions we have that $\omega_R\cong R(K_X)$ for a choice of \emph{canonical divisor} $K_X$ on $X=\Spec(R)$ which is Cartier in codimension $1$ (see \cite[Proposition 2.8]{Har94}).

\begin{definition}\label{Definition Q-gor}
 A ring $R$ is said to be \emph{$\Q$-Gorenstein} if $R$ has a canonical module and
\begin{enumerate}
    \item $R$ is $(G_1)$ and $(S_2)$ with choice of canonical divisor $K_X$;
    \item there exists an integer $n$ so that $nK_X$ is Cartier.
\end{enumerate}  The least integer $n$ so that $nK_X$ is Cartier is referred to as the \emph{$\Q$-Gorenstein index of $R$}. The $\Q$-Gorenstein index is independent of choice of canonical divisor as $K_X$ is Cartier in codimension $1$, i.e. $R$ is $(G_1)$.
\end{definition}

If $D\leq  0$ is an anti-effective divisor, Cartier in codimension $1$, then $I=R(D)\subseteq R$ is an ideal of pure height $1$. Conversely, if $I\subseteq R$ is an ideal of pure height $1$, principal in codimension $1$, then $I=R(D)$ for some anti-effective divisor $D$ which is Cartier in codimension $1$. For every natural number $N\geq 1$ the divisorial ideal $R(ND)$ agrees with $I^{(N)}$, the $N$th symbolic power of $I$. 

Suppose that $D$ is a $\Q$-Cartier (i.e. torsion, since $R$ is local) divisor of index $n$ which is Cartier in codimension $1$. Suppose that $R(nD)=R\cdot u$. The cyclic cover of $R$ corresponding to $D$ is the finite $R$-algebra
\[
R\to R_D:= \bigoplus_{i=0}^{n-1}R(-iD)t^{-i}\cong \bigoplus_{i=0}^{\infty}R(-iD)t^{-i}/(u^{-1}t^{-n}-1).
\]
The map $R\to R_D$ is a finite $R$-module homomorphism and the ring $R_D$ decomposes into a direct sum of $(S_2)$ $R$-modules, hence $R_D$ is $(S_2)$. Furthermore, we can explicitly describe the canonical module of $R_D$ as 
\[
\omega_{R_D}\cong \Hom_R(R_D, R(K_X))\cong \bigoplus_{i=0}^{n-1}R(K_X+iD)t^i.
\]
The above computation commutes with localization and hence $R_D$ is Gorenstein in codimension $1$. Indeed, if $P$ is a height $1$ prime ideal of $R_D$ then $\fp=P \cap R$ is a height $1$ prime of $R$ and $R_D\otimes_R R_\fp \cong \omega_{R_D}\otimes_R R_\fp$. Localizing further we find that $(\omega_{R_D})_{P}\cong (R_D)_P$. In what follows, we denote $(-)^\vee:=\Hom_{R_D}(-,R_D)$.

The following lemma is well-known to experts. We sketch a proof for sake of completeness. 
\begin{lemma}
\label{Lemma cyclic cover changing index}
Let $(R,\fm,k)$ be a local $\Q$-Gorenstein ring with canonical divisor $K_X$ on $X=\Spec(R)$. Suppose that $R$ is of $\Q$-Gorenstein composite index $m\cdot n$ and $D:=mK_X$. Then the cyclic cover $R_D$ corresponding to the $\Q$-Cartier divisor $D$ is $\Q$-Gorenstein of index $m$.
\end{lemma}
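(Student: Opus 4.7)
The preceding discussion already establishes that $R_D$ is $(G_1)$ and $(S_2)$ and gives the explicit canonical module
\[
\omega_{R_D}\;\cong\;\bigoplus_{i=0}^{n-1}R(K_X+iD)\,t^{i}.
\]
Taking $K_{X'}:=\pi^{*}K_X$ as canonical divisor on $X':=\Spec(R_D)$, where $\pi:X'\to X$ is the structural finite cover, the task reduces to showing that the smallest positive integer $s$ for which $sK_{X'}$ is Cartier is exactly $m$.

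\textbf{The index divides $m$.} Compute $mK_{X'}=\pi^{*}(mK_X)=\pi^{*}D$. The defining relation $t^{-n}=u$ together with $R(nD)=R\cdot u$ yields $n\pi^{*}D=-\di(u)=n\,\di(t)$ as Weil divisors on $X'$; since the group of Weil divisors is torsion-free, $\pi^{*}D=\di(t)$ is principal. Hence $mK_{X'}$ is Cartier and the $\Q$-Gorenstein index of $R_D$ divides $m$.

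\textbf{The index is at least $m$.} Suppose $sK_{X'}$ is Cartier with $1\le s\le m$; I will show $m\mid s$, which forces $s=m$. A graded induction paralleling the derivation of $\omega_{R_D}$ — using $t^{n}=u^{-1}$ and the identity $u^{-1}R(E)=R(E-nD)$ of fractional ideals of $R$ to collapse pairwise products of graded pieces — gives
\[
R_D(sK_{X'})\;\cong\;\bigoplus_{i=0}^{n-1}R(sK_X+iD)\,t^{i}
\]
as $R$-modules. Because $sK_{X'}$ is Cartier, this is $R$-isomorphic to $R_D\cong\bigoplus_{i=0}^{n-1}R(-iD)\,t^{-i}$. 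Each summand on either side is a rank one reflexive $R$-module with endomorphism ring $R$ (a consequence of $(G_1)+(S_2)$), hence indecomposable. The Krull--Schmidt--Azumaya theorem therefore forces an equality of multisets of isomorphism classes of summands, which in $\Cl(R)$ reads
\[
\{\,[sK_X+iD]:0\le i\le n-1\,\}\;=\;\{\,[-iD]:0\le i\le n-1\,\}\;=\;\langle[D]\rangle,
\]
the last equality because $[D]$ has order exactly $n$. Hence $[sK_X]\in\langle[D]\rangle$, so $sK_X\sim jmK_X$ for some $j\in\Z$. Since $K_X$ has order exactly $mn$ in $\Cl(R)$ by the $\Q$-Gorenstein index hypothesis, $mn\mid s-jm$, whence $m\mid s$ as required.

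The main obstacle is verifying the displayed formula for $R_D(sK_{X'})$ at general $s\ge 1$. This is a careful but routine induction built on the $s=1$ case recorded in the preliminary discussion: the inductive step takes the reflexive tensor product with $\omega_{R_D}$, collects terms by their $\Z/n\Z$-grading in $K(R_D)$, and applies $t^{n}=u^{-1}$ to bring each graded piece into the advertised form $R(sK_X+iD)\,t^{i}$ after reflexification.
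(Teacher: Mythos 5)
Your proof is correct and follows essentially the same route as the paper: both identify $K_{X'}=\pi^*K_X$ via the explicit formula $\omega_{R_D}\cong\bigoplus_{i=0}^{n-1}R(K_X+iD)t^i$, observe that $mK_{X'}=\pi^*D=\operatorname{div}(t)$ is principal, and then rule out a smaller index. The one place you go beyond the paper is the lower bound, where the paper merely asserts ``it is easy to check that $\omega_{R_D}^{(m')}\not\cong R_D$'' while you supply a genuine argument via Krull--Schmidt--Azumaya on the graded $R$-module decomposition; that added rigor is a welcome improvement, not a divergence in method.
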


\begin{proof}
We identify $\omega_{R_D}$ as $\bigoplus_{i=0}^{n-1}R(K_X+iD)t^i$ as above. Notice that after relabeling indices we have
\[
\omega_{R_D}\cong \bigoplus_{i=0}^{n-1}R(K_X+iD)t^i\cong \bigoplus_{i=0}^{n-1}R(K_X-(m-i)D)t^{-(m-i)}\cong (R(K_X)R_D)^{\vee\vee}
\]
where $R(K_X)R_D$ is the extension of $R(K_X)$ to the cyclic cover $R_D$ and $(R(K_X)R_D)^{\vee \vee}$ is the reflexification of $R(K_X)R_D$. Therefore if $K_Y$ is a choice of canonical divisor of $Y=\Spec(R_D)$ and $\pi: Y\to X$ is the associated map of affine schemes then $K_Y\sim \pi^*K_X$, where $\pi^*K_X$ is the divisor on $Y$ such that $R_D(\pi^*K_X)=(R(K_X)R_D)^{\vee\vee}$. Therefore $mK_Y\sim \pi^*mK_X=\pi^*D$, and $\pi^*D\sim 0$ as $(R(D)R_D)^{\vee\vee}$ is isomorphic to a shift of $R_D$.

If $0<m'<m$ then $m'K_Y\sim\pi^*m'K_X$ is not principal. Else, $R_D$ is isomorphic to a shift of $(R(m'K_Y)R_D)^{\vee\vee}$. If this is the case, then $(m'-im)K_X\sim 0$ for some $0\leq i<n$. This is a contradiction as $1\leq -(m'-im)<mn$ and we assumed the index of $K_X$ to be $mn$.
\end{proof}

In Section~\ref{Section F-purity deforms and is stable} we will need to understand the behavior of divisorial ideals under base change. In particular, we will need to know that the base change of a divisorial ideal remains a divisorial ideal under suitable hypotheses. The following lemma will be a key tool in accomplishing this.

\begin{lemma}
\label{Lemma base change of divisorial ideal}
Let $(R,\fm,k)$ be a local ring which is $(S_2)$ and $(G_1)$. Suppose that $J\subsetneq R$ is a canonical ideal of $R$. Let $f\in \fm$ be a non-zero-divisor so that $f$ is regular on $R/J$ and $R/(f)$ is $(S_2)$ and $(G_1)$. If for each $\fp\in V(f)$ we have that $\depth(J^{(n)}R_\fp)\geq \min \{\Ht(\fp),3\}$, (i.e. $J^{(n)}R_\fp$ is $(S_3)$ for each $\fp\in V(f)$), then $J^{(n)}/fJ^{(n)}\cong \left((J,f)/(f)\right)^{(n)}$. In particular, the isomorphism holds when $J^{(n)}$ is Cohen-Macaulay.
\end{lemma}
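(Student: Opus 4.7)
The plan is to realize $J^{(n)}/fJ^{(n)}$ as an $(S_2)$ rank-one $\bar{R}$-module and identify its class in the generalized divisor class group of $\bar{R}$ with that of $\bar{J}^{(n)}$. Crucially, this will be an isomorphism of $\bar{R}$-modules rather than an equality of submodules of $\bar{R}$: the natural reduction map embeds $J^{(n)}/fJ^{(n)}$ into $\bar{R}$ as an ideal which may properly contain $\bar{J}^{(n)}$, but the two abstract modules agree.

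First I would verify that $f$ is a non-zero-divisor on $J^{(n)}$: since $R$ is $(S_2)$, $\Ass(J^{(n)}) \subseteq \Ass(R) = \operatorname{Min}(R)$ (as $J^{(n)} \subseteq R$), and $f$ avoids $\operatorname{Min}(R)$. Reduction modulo $f$ then yields an $\bar{R}$-linear injection $\iota: J^{(n)}/fJ^{(n)} \hookrightarrow \bar{R}$ with image $(J^{(n)}+(f))/(f)$. Applying the depth hypothesis to the short exact sequence $0 \to J^{(n)} \xrightarrow{f} J^{(n)} \to J^{(n)}/fJ^{(n)} \to 0$, for each $\bar{\fp} \in \Spec \bar{R}$ corresponding to $\fp \in V(f)$ we obtain
\[
\depth_{\bar{R}_{\bar{\fp}}}\bigl(J^{(n)}/fJ^{(n)}\bigr)_{\bar{\fp}} \;\geq\; \depth_{R_\fp}(J^{(n)}R_\fp) - 1 \;\geq\; \min(\Ht\fp,3) - 1 \;=\; \min(\Ht\bar{\fp},\,2),
\]
so $J^{(n)}/fJ^{(n)}$ is $(S_2)$ as a $\bar{R}$-module, and $\bar{J}^{(n)}$ is $(S_2)$ by construction.

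The heart of the argument is a local identification at each height-one prime $\bar{\fp}$ of $\bar{R}$. Let $\fp \in V(f)$ be the corresponding height-two prime of $R$. If $J \not\subseteq \fp$, both modules localize to $\bar{R}_{\bar{\fp}}$. Otherwise $R_\fp$ is a $2$-dimensional $(S_2)$-local — hence Cohen-Macaulay — ring with canonical ideal $JR_\fp$, and $\bar{R}_{\bar{\fp}}$ is $1$-dimensional Gorenstein by the $(G_1)$ hypothesis on $\bar{R}$. The classical adjunction $\omega_{\bar{R}_{\bar{\fp}}} \cong \omega_{R_\fp}/f\omega_{R_\fp}$, combined with the Cohen-Macaulayness of $\omega_{R_\fp}^{(n)} = J^{(n)}R_\fp$ (automatic in dimension two and enforced by the depth hypothesis otherwise), yields
\[
\omega_{R_\fp}^{(n)}\bigm/f\omega_{R_\fp}^{(n)} \;\cong\; \omega_{\bar{R}_{\bar{\fp}}}^{(n)} \;=\; \bar{J}^{(n)}_{\bar{\fp}}
\]
as $\bar{R}_{\bar{\fp}}$-modules. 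Globally, this exhibits both $(J^{(n)}+(f))/(f)$ and $\bar{J}^{(n)}$ as realizing the restriction of the class $n[K_R] \in \Cl(R)$ along $R \to \bar{R}$; the theory of divisor classes in $(S_2)$ $(G_1)$ rings then forces $J^{(n)}/fJ^{(n)} \cong \bar{J}^{(n)}$ as $\bar{R}$-modules.

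\textbf{Main obstacle.} The central technical step is the $n$-fold extension of the adjunction, $\omega_{R_\fp}^{(n)}/f\omega_{R_\fp}^{(n)} \cong \omega_{\bar{R}_{\bar{\fp}}}^{(n)}$, which is precisely where the Cohen-Macaulay hypothesis on $\omega_{R_\fp}^{(n)}$ is essential; indeed, at the level of submodules of $\bar{R}_{\bar{\fp}}$, the natural image $(J^{(n)}R_\fp + (f))/(f)$ may properly contain $\bar{J}^{(n)}_{\bar{\fp}}$, the discrepancy being a quotient of $M/fM$ where $M = J^{(n)}R_\fp/(JR_\fp)^n$ is the finite-length embedded part, so one must upgrade from submodule-inclusion to abstract module isomorphism via the divisor-class formalism. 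Passing from these local identifications to the global $\bar{R}$-module isomorphism then uses the $(S_2)$ condition and the group-homomorphism property of the restriction $\Cl(R) \to \Cl(\bar{R})$ in the $(S_2)$ $(G_1)$ setting.
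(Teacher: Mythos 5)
Your first half (realizing $J^{(n)}/fJ^{(n)}$ as the ideal $(J^{(n)},f)/(f)$ inside $\bar R:=R/(f)$, and using the depth hypothesis to show it is an $(S_2)$ $\bar R$-module) is exactly what the paper does. The gap is in the codimension-$1$ comparison, which you flag yourself as the ``main obstacle'' and then do not close. You worry that the image $(J^{(n)}R_\fp+(f))/(f)$ may properly contain $\bar J^{(n)}_{\bar\fp}$, the discrepancy being $M/fM$ with $M=J^{(n)}R_\fp/(JR_\fp)^n$, and you propose to fix this by ``upgrading from submodule-inclusion to abstract module isomorphism via the divisor-class formalism.'' That cannot work: two $(S_2)$ ideals of $\bar R$ that genuinely disagree at a height-$1$ prime are not isomorphic as $\bar R$-modules, and conversely once they agree at all height-$1$ primes the $(S_2)$ condition forces them to be \emph{equal} as ideals, not merely abstractly isomorphic. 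So the ``upgrade'' is a red herring; what is needed is to show that $M=0$, and you never do.

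The missing observation, which is what the paper's appeal to $(G_1)$ is really using, is this. Let $\bar\fp$ be a height-$1$ prime of $\bar R$ containing the image of $J$, corresponding to a height-$2$ prime $\fp\in V(f)$. You correctly note that $\bar R_{\bar\fp}=R_\fp/(f)$ is Gorenstein. Since $f$ is a non-zero-divisor, Gorensteinness deforms, so $R_\fp$ is Gorenstein. Therefore the canonical ideal $JR_\fp\cong\omega_{R_\fp}\cong R_\fp$ is principal, generated by a non-zero-divisor, and for a principal height-$1$ ideal symbolic and ordinary powers coincide: $J^{(n)}R_\fp=(JR_\fp)^{(n)}=(JR_\fp)^n$. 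Thus your module $M$ vanishes, and one gets an honest equality
\[
\bigl((J^{(n)},f)/(f)\bigr)_{\bar\fp}
=\bigl((J,f)/(f)\bigr)^n_{\bar\fp}
=\bigl(((J,f)/(f))^{(n)}\bigr)_{\bar\fp}
\]
at every height-$1$ prime. Combined with the $(S_2)$-ness of both ideals this gives $(J^{(n)},f)/(f)=\bigl((J,f)/(f)\bigr)^{(n)}$ as ideals of $\bar R$, finishing the proof. Once you see this, the ``classical adjunction'' and ``restriction of divisor classes'' machinery in your write-up is unnecessary; the deformation of Gorensteinness at height-$2$ primes is the entire point, and the depth hypothesis on $J^{(n)}$ plays its role only at higher-codimension points of $V(f)$, where it guarantees $(S_2)$-ness of $(J^{(n)},f)/(f)$.
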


\begin{proof}
Consider the short exact sequence
\[
0\to J^{(n)}\to R\to R/J^{(n)}\to 0.
\]
Since $f$ is regular on $R$ and $R/J$ we have that $f$ is regular on $R/J^{(n)}$ for all $n$ and $\Tor_1(R/(f),R/J^{(n)})\cong H_1(f;R/J^{(n)})=0$, hence there is a short exact sequence
\[
0\to \frac{J^{(n)}}{fJ^{(n)}}\to \frac{R}{(f)}\to \frac{R}{(J^{(n)},f)}\to 0,
\]
and therefore $J^{(n)}/fJ^{(n)}\cong (J^{(n)},f)/(f)$.

We notice that the ideals $(J^{(n)},f)/(f)$ and $\left((J,f)/(f)\right)^{(n)}$ agree at codimension $1$ points of $\Spec(R/(f))$ by the assumption that $R/(f)$ is $(G_1)$. Moreover, our assumptions imply that $(J^{(n)},f)/(f)$ is an $(S_2)$ $R/(f)$-module, which together with the previous sentence tells us that $(J^{(n)},f)/(f)\cong \left((J,f)/(f)\right)^{(n)}$ as claimed.
\end{proof}

\subsection{\texorpdfstring{$\mathbb{Q}$}{ℚ}-Gorenstein rings and the \texorpdfstring{$(G_1)$}{G\_1} property}

The following result shows that being Gorenstein in codimension $1$ deforms and is $\fm$-adically stable. The proof uses a standard trick involving the Krull intersection theorem which we use elsewhere in this article with details omitted.

\begin{proposition}
\label{Proposition Gorenstein in codimension 1 is stable}
Let $(R,\fm,k)$ be a local equidimensional and catenary ring which admits a canonical module. Suppose that $\dim R\geq 2$. If $f\in \fm$ is a non-zero-divisor and $R/(f)$ is $(G_1)$, then $R$ is $(G_1)$. If $R$ is further assumed to be excellent, then the rings $R/(f+\epsilon)$ are $(G_1)$ for all $\epsilon\in\fm^{N\gg 0}$.
\end{proposition}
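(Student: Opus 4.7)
The plan is to prove the deformation statement first by a direct localization argument at each prime of height at most one, and then to bootstrap to $\fm$-adic stability via the Krull intersection theorem together with closedness of the non-Gorenstein locus in the excellent setting.

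For the deformation statement, I will verify that $R_\fp$ is Gorenstein for every prime $\fp\subset R$ with $\Ht\fp\leq 1$. The argument rests on two ingredients: the classical deformation of Gorensteinness along a nonzerodivisor (if $g$ is a nonzerodivisor in a local ring $A$ with $A/(g)$ Gorenstein, then $A$ is Gorenstein), and the identity $\Ht_{R/(f)}(\fq/(f))=\Ht_R\fq-1$ for $\fq\supseteq(f)$, valid in any equidimensional catenary ring modulo a nonzerodivisor. If $f\in\fp$, then $\fp/(f)$ is a point of $R/(f)$ of height at most zero, so the $(G_1)$ hypothesis forces $(R/(f))_{\fp/(f)}=R_\fp/fR_\fp$ to be Gorenstein; the classical deformation gives $R_\fp$ Gorenstein. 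If $f\notin\fp$, Krull's principal ideal theorem applied to the nonzero image of $f$ in $R/\fp$ produces a prime $\fq\supsetneq\fp$ of height at most $2$ in $R$ containing $f$; the preceding case applied at $\fq$ (with the $(G_1)$ hypothesis now used on the height-$\leq 1$ point $\fq/(f)$ of $R/(f)$) gives $R_\fq$ Gorenstein, and then $R_\fp$ is Gorenstein by localization.

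For the $\fm$-adic stability statement, I use excellence of $R$ to ensure the non-Gorenstein locus $V(J)$ is closed with only finitely many minimal primes $\fp_1,\ldots,\fp_\ell$; by the deformation statement, $\Ht\fp_i\geq 2$ for each $i$. The same case analysis also produces the sharper bound $\Ht\fp_i\geq 3$ whenever $f\in\fp_i$, since otherwise the image $\fp_i/(f)$ would be a height-$\leq 1$ point of the non-Gorenstein locus of $R/(f)$, contradicting its $(G_1)$ property. Consequently, every height-$2$ minimal prime $\fp_i$ of $J$ satisfies $f\notin\fp_i$, and the Krull intersection theorem inside the local ring $R/\fp_i$ produces an integer $N_i$ with $f\notin\fp_i+\fm^{N_i}$. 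Taking $N$ larger than every such $N_i$, and enlarging further if needed so that $f+\epsilon$ remains a nonzerodivisor for $\epsilon\in\fm^N$ (possible since $\Ass R$ is finite), I claim $R/(f+\epsilon)$ is $(G_1)$ for all $\epsilon\in\fm^N$. Indeed, if $\fq\subset R$ satisfies $f+\epsilon\in\fq$ and $R_\fq$ is not Gorenstein, then $\fq$ contains some minimal prime $\fp_i$ of $J$; either $\fq=\fp_i$ (in which case the choice of $N$ forces $\Ht\fp_i\geq 3$, since the height-$2$ case has been ruled out), or $\fq\supsetneq\fp_i$ and Krull's principal ideal theorem yields $\Ht\fq\geq\Ht\fp_i+1\geq 3$. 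Translating these height inequalities back to $R/(f+\epsilon)$ gives codimension at least $2$ for its non-Gorenstein locus, so $R/(f+\epsilon)$ is $(G_1)$.

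The expected main obstacle is the sharper codimension estimate $\Ht\fp_i\geq 3$ for minimal non-Gorenstein primes containing $f$, since this is precisely what separates the height-$2$ minimal primes of the non-Gorenstein locus from $V(f)$ and powers the Krull intersection argument. The remainder is bookkeeping about heights: excellence supplies the finitely-many-minimal-primes structure of $V(J)$, while equidimensionality and catenarity translate heights cleanly between $R$ and its nonzerodivisor quotients.
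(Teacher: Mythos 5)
Your proof is correct and takes essentially the same approach as the paper: for deformation, both arguments localize at a height-$2$ prime containing $f$ and invoke deformation of Gorensteinness, and for stability both use closedness of the non-Gorenstein locus of an excellent ring together with the Krull intersection theorem to move $f+\epsilon$ off the finitely many height-$2$ non-Gorenstein primes. A minor bonus of your write-up is that you explicitly note that $f+\epsilon$ remains a nonzerodivisor for $\epsilon\in\fm^{N\gg 0}$, a detail the paper leaves implicit.
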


\begin{proof}
Suppose that $\fp$ is a height $1$ prime. Then the ideal $(\fp,f)$ has height no more than $2$ since we are assuming $R$ is equidimensional and catenary and hence we can choose a height $2$ prime $\fq\in V((\fp,f))$. Then $R_\fq/(f)R_\fq$ is Gorenstein by assumption. The property of being Gorenstein deforms \cite[\href{https://stacks.math.columbia.edu/tag/0BJJ}{Tag 0BJJ}]{stacks-project} and therefore $R_\fq$ is Gorenstein. Localizing further we find that $R_\fp$ is Gorenstein as well, hence $R$ is $(G_1)$.

The non-Gorenstein locus of an excellent local ring is a closed subset (e.g. by \cite[Theorem 24.6]{Mat86}). Therefore since the ring $R$ is Gorenstein in codimension $1$, the height $2$ primes in the non-Gorenstein locus of $R$ form a finite set, say $\{\fp_1,\dots, \fp_t\}$. Furthermore, $f\not\in\fp_i$ for all $i$ by the assumption that $R/(f)$ is $(G_1)$. Moreover, $\bigcap_N(\fm^N+\fp_i)=\fp_i$ for all $i$ by Krull's intersection theorem, so there exists $N_i> 0$ so that $f\not\in\fm^{N_i}+\fp_i$. Choosing $N\geq \max_i\{N_i\}$ we obtain that $f+\epsilon\not\in\fp_i$ for all $i$ and all $\epsilon\in\fm^N$. Hence, $R/(f+\epsilon)$ is $(G_1)$ for all $\epsilon\in\fm^N$. 
\end{proof}

\begin{lemma}
\label{lemma base change the canonical module}
Let $(R,\fm,k)$ be a local ring and $f\in \fm$ a non-zero-divisor of $R$ such that $R/(f)$ is $(G_1)$ and $(S_2)$. Further suppose that $R$ has a canonical module. Let $J\subsetneq R$ be a canonical ideal of $R$ such that $f$ is regular on $R/J$. If $\left((J,f)/(f)\right)^{\un}$ denotes the intersection of the minimal primary components of the ideal $(J,f)/(f)$ of $R/(f)$ then $\left((J,f)/(f)\right)^{\un}\cong \omega_{R/(f)}$.
\end{lemma}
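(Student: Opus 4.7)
The plan is to exhibit a canonical injection $(J,f)/(f) \hookrightarrow \omega_{R/(f)}$ whose cokernel is supported only at primes of height at least two on $\Spec(R/(f))$, and then to invoke the uniqueness of the $(S_2)$-ification in the $(G_1)$, $(S_2)$ ring $R/(f)$. First, since $f$ is a non-zero-divisor on both $R$ and $R/J$, the short exact sequence $0 \to J \to R \to R/J \to 0$ forces $\Tor_1^R(R/(f), R/J) = 0$; tensoring with $R/(f)$ thus gives $(J,f)/(f) \cong J/fJ$. Since $J \cong \omega_R$ as $R$-modules and $f$ is regular on $\omega_R$ (being regular on $R \supseteq J$), this identifies $(J,f)/(f)$ with $\omega_R/f\omega_R$.

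Next, write $R \cong S/I$ with $S$ a regular local ring and set $h = \Ht I$, so that $\omega_R \cong \Ext^h_S(R,S)$ and $\omega_{R/(f)} \cong \Ext^{h+1}_S(R/(f), S)$. Applying $\Ext^{\bullet}_S(-, S)$ to the short exact sequence $0 \to R \xrightarrow{f} R \to R/(f) \to 0$ produces the five-term exact sequence
\[
0 \to \omega_R/f\omega_R \to \omega_{R/(f)} \to \Ext^{h+1}_S(R, S)[f] \to 0,
\]
where $[f]$ denotes the $f$-torsion submodule. Combined with the previous paragraph, this realizes $(J,f)/(f)$ as a submodule of $\omega_{R/(f)}$ with cokernel contained in $\Ext^{h+1}_S(R,S)$.

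The crux is to show that $\Ext^{h+1}_S(R,S)_\fq = 0$ at every height two prime $\fq \subset R$ containing $f$. If $P$ denotes the corresponding height one prime of $R/(f)$, then $(R/(f))_P$ is a one-dimensional $(S_2)$ ring, hence Cohen-Macaulay; lifting a parameter from $(R/(f))_P$ and using that $f$ is regular on $R_\fq$ yields $\depth R_\fq \geq 2$. Auslander--Buchsbaum applied over the regular local ring $S_{\fq_S}$ of dimension $h+2$ (where $\fq_S$ is the preimage of $\fq$ in $S$) then gives $\pd_{S_{\fq_S}}(R_\fq) \leq h$, and consequently $\Ext^{h+1}_S(R, S)_\fq = 0$. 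Thus $(J,f)/(f) \hookrightarrow \omega_{R/(f)}$ is an isomorphism after localization at every height one prime of $R/(f)$.

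Finally, $\omega_{R/(f)}$ is an $(S_2)$ module of rank one over the $(G_1)$, $(S_2)$ ring $R/(f)$, and by the preceding step the inclusion $(J,f)/(f) \hookrightarrow \omega_{R/(f)}$ has cokernel supported in codimension at least two. Uniqueness of the $(S_2)$-ification of a pure height one ideal in such a ring then identifies $\omega_{R/(f)}$ with the unmixed part $((J,f)/(f))^{\un}$. The main obstacle is the Ext vanishing step, which requires propagating Cohen--Macaulayness from $R/(f)$ back up to $R$ at height two primes; the remaining steps are routine manipulations of the long exact Ext sequence.
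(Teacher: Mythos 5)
Your proposal is correct and follows the same route as the paper's proof: both identify $(J,f)/(f)$ with $\omega_R/f\omega_R$ (via the $\Tor$ vanishing), embed it into $\omega_{R/(f)}$ through the long exact $\Ext$ sequence over the regular presentation, observe that the cokernel, a submodule of $\Ext^{h+1}_S(R,S)$, dies at height one primes of $R/(f)$ because of the $(S_2)$ hypothesis, and conclude by $(S_2)$-ification/reflexification. The only difference is one of exposition: where the paper simply asserts that $\Ext^{h+1}_S(R,S)$ is not supported at height one points of $R/(f)$, you unwind this via Auslander--Buchsbaum and a depth count, which spells out the same underlying Cohen--Macaulayness of $R_\fq$ that the paper is implicitly using.
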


\begin{proof}
 Consider the short exact sequence
\[
0\to R\xrightarrow{\cdot f}R\to R/(f)\to 0.
\]
Observe that $\omega_{R/(f)}\cong \Ext^1_R(R/(f),J)$ by \cite[Satz 5.12]{HZ71} and so there is an exact sequence
\[
0\to J\xrightarrow{\cdot f}J\to \omega_{R/(f)}\to \Ext^1_S(R,J)
\]
and hence there is a left exact sequence
\[
0\to \frac{J}{fJ}\cong \frac{(J,f)}{(f)}\to \omega_{R/(f)}\to \Ext_S^{1}(R,J).
\]
By the assumption that $R/(f)$ is $(S_2)$, we know that $\Ext_S^{1}(R,S)$ is not supported at any height $1$ component of $R/(f)$. Hence $(J,f)/(f)\to \omega_{R/(f)}$ is an isomorphism at codimension $1$ points of $R/(f)$. By \cite[Proposition~1.11]{Har94} we have that if $(-)^\vee$ denotes $\Hom_R(-,R/(f))$ then
\[
\left(\frac{(J,f)}{(f)}\right)^{\vee\vee}\cong \omega_{R/(f)}.
\]
Therefore the lemma is proven as $\left((J,f)/(f)\right)^{\vee\vee}\cong \left((J,f)/(f)\right)^{\un}$.
\end{proof}

\begin{proposition}
\label{Proposition Q-Gorenstein index under base change}
Let $(R,\fm,k)$ be an excellent equidimensional local $\Q$-Gorenstein ring of index $n$ satisfying Serre's condition $(S_2)$. Suppose that $f\in \fm$ is a non-zero-divisor and $R/(f)$ is $(G_1)$ and $(S_2)$. Then $R/(f)$ is $\Q$-Gorenstein of index dividing $n$. Moreover, we may choose canonical ideal $J\subsetneq R$ and element $a\in J$ so that
\begin{enumerate}
    \item $J^{(n)}=(a)$ and $J/fJ\cong (J,f)/(f)$;
    \item $\left((J,f)/(f)\right)^{\un}\cong \omega_{R/(f)}$ and $\left(((J,f)/(f)\right)^{\un})^{(n)}=(a,f)/(f)$.
\end{enumerate}
\end{proposition}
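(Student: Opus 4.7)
The plan is to choose a proper canonical ideal $J\subsetneq R$ on which $f$ acts regularly and with principal $n$-th symbolic power $J^{(n)}=(a)$, and then to deduce (1) and (2) from Lemmas~\ref{Lemma base change of divisorial ideal} and~\ref{lemma base change the canonical module}. The hard step is the construction of such a $J$; the two lemmas then do the heavy computational lifting.

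To construct $J$, I would begin with any anti-effective canonical divisor $K_X$ on $X=\Spec R$ and set $J_0=R(K_X)$. Since $R$ is $\Q$-Gorenstein of index $n$, the divisor $nK_X$ is principal, so $J_0^{(n)}=(a_0)$ for some non-zero-divisor $a_0\in J_0$. The height $1$ primes of $R$ containing $f$ form a finite set $\{\fq_1,\ldots,\fq_t\}$, and the minimal primes of $R/J_0$ are exactly the height $1$ primes in the support of $-K_X$. Because $K_X$ is determined only up to linear equivalence and one can prescribe orders at finitely many height $1$ primes via rational functions in the total ring of fractions, I would replace $K_X$ by a linearly equivalent anti-effective divisor $K$ whose support avoids $\{\fq_1,\ldots,\fq_t\}$. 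The resulting canonical ideal $J=R(K)\subsetneq R$ satisfies $J^{(n)}=(a)$ for some $a\in J$, and no minimal prime of $R/J$ contains $f$. Any embedded associated primes of $R/J$ that might still contain $f$ (a finite further set) are excluded by an additional prime-avoidance modification, leaving $f$ regular on $R/J$. This construction is where most of the technical work of the proof sits.

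With $J$ and $a$ in hand, (1) is nearly automatic: $J^{(n)}=(a)$ by construction, and regularity of $f$ on $R/J$ forces $\Tor_1^R(R/(f),R/J)=0$, whence tensoring $0\to J\to R\to R/J\to 0$ with $R/(f)$ yields $J/fJ\cong (J,f)/(f)$. For (2), I would first verify the depth hypothesis of Lemma~\ref{Lemma base change of divisorial ideal} at primes $\fp\in V(f)$: since $J^{(n)}=(a)\cong R$ (as $a$ is a non-zero-divisor), it suffices to check $\depth R_\fp\geq \min\{\Ht\fp,3\}$; this holds by $(S_2)$ of $R$ when $\Ht\fp\leq 2$, and by the depth lemma applied to the $(S_2)$ quotient $R/(f)$ when $\Ht\fp\geq 3$. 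Lemma~\ref{Lemma base change of divisorial ideal} then yields the identification $(a,f)/(f)=(J^{(n)},f)/(f)\cong ((J,f)/(f))^{(n)}$ inside $R/(f)$. Meanwhile, Lemma~\ref{lemma base change the canonical module} gives $((J,f)/(f))^{\un}\cong \omega_{R/(f)}$. Using that $(I^{\un})^{(n)}=I^{(n)}$ for any pure height $1$ ideal $I$ (both sides are $(S_2)$ and agree in codimension $1$), we conclude
\[
\omega_{R/(f)}^{(n)}\cong \bigl(((J,f)/(f))^{\un}\bigr)^{(n)}=((J,f)/(f))^{(n)}=(a,f)/(f),
\]
which is principal in $R/(f)$. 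Hence $R/(f)$ is $\Q$-Gorenstein of index dividing $n$, proving (2).
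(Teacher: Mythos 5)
Your proof is correct and closely parallels the paper's in structure: the construction of a canonical ideal $J$ whose components avoid $V(f)$ (the paper does this by writing $\omega_R\cong J\cdot u$ after localizing at the complement of the height $1$ components of $(f)$, while you rephrase the same move in divisor language), the Tor-vanishing argument for $J/fJ\cong (J,f)/(f)$, and the appeal to Lemma~\ref{lemma base change the canonical module} for $\left((J,f)/(f)\right)^{\un}\cong\omega_{R/(f)}$ are all the same. The one place you diverge is the final equality $\left(((J,f)/(f))^{\un}\right)^{(n)}=(a,f)/(f)$. You derive it from Lemma~\ref{Lemma base change of divisorial ideal} applied with exponent $n$, after observing that since $J^{(n)}=(a)\cong R$ the depth hypothesis $\depth(J^{(n)}R_\fp)\geq\min\{\Ht(\fp),3\}$ reduces to a depth estimate on $R_\fp$, which follows from $(S_2)$ of $R$ when $\Ht(\fp)\leq 2$ and from $(S_2)$ of $R/(f)$ plus the non-zero-divisor $f$ when $\Ht(\fp)\geq 3$; you then pass from $((J,f)/(f))^{(n)}$ to $\left(((J,f)/(f))^{\un}\right)^{(n)}$ via the codimension-$1$ comparison. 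The paper instead argues directly: both sides are $(S_2)$ ideals of $R/(f)$, so it suffices to compare them at height $2$ primes $\fp\in V(f)$, where $R_\fp$ is Gorenstein (deformation of Gorensteinness from $R_\fp/(f)R_\fp$), hence $J^N R_\fp=J^{(N)}R_\fp$ for all $N$ and the localized ideals visibly coincide. Both routes are valid; the paper's is a touch more elementary, while yours has the merit of making explicit that Lemma~\ref{Lemma base change of divisorial ideal}'s depth hypothesis is automatic for the principal symbolic power $J^{(n)}$. One small redundancy: since $J=R(K)$ is a divisorial ideal it is $(S_2)$ and hence unmixed of pure height $1$, so there are no embedded primes of $R/J$ and the extra prime-avoidance step you mention is unnecessary.
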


\begin{proof}
By Proposition~\ref{Proposition Gorenstein in codimension 1 is stable} the ring $R$ is $(G_1)$. We first show the existence of a canonical ideal $J\subseteq R$ so that $((J,f)/(f))^{\un}\subseteq R/(f)$ is a canonical ideal of $R/(f)$. We will then show that $(((J,f)/(f))^{\un})^{(n)}$ is a principal ideal of $R/(f)$, i.e. the $\Q$-Gorenstein index of $R/(f)$ divides the $\Q$-Gorenstein index of $R$ as claimed.

Suppose that $\omega_R$ is a canonical module of $R$. Let $W$ be the complement of the union of the height $1$ components of $f$. Then $(\omega_R)_W\cong R_W$. Thus there exists $u\in \omega_R$ and ideal $J\subseteq R$ not contained in any height $1$ component of $(f)$, so that $\omega_R\cong J\cdot u$. Then $J\subseteq R$ is a canonical ideal of $R$. Moreover, since the components of $J$ are disjoint from the components of $(f)$ we may assume that $J$ is an ideal of pure height $1$ and $f$ is a regular element of $R/J$. As in the proof of Lemma~\ref{Lemma base change of divisorial ideal} we have that $J/fJ\cong (J,f)/(f)$ and we have by Lemma~\ref{lemma base change the canonical module} that $((J,f)/(f))^{\un}\cong \omega_{R/(f)}$.

Suppose that $J^{(n)}=(a)$. We claim that $(\left((J,f)/(f)\right)^{\un})^{(n)}=(a,f)/(f)$. Equivalently, we need to show that if $\fp\in V(f)$ with $\Ht\fp=2$ then $\left((J^n,f)/(f)\right)R_\fp=\left( (a,f)/(f)\right) R_\fp$. We are assuming $R/(f)$ is $(G_1)$ and hence $R_\fp/(f)R_\fp$ is Gorenstein. The property of being Gorenstein deforms and therefore $R_\fp$ is Gorenstein. In particular, $J^NR_\fp=J^{(N)}R_\fp$ for every $N\in \mathbb{N}$ and so
\[
\left((J^n,f)/(f)\right)R_\fp=\left((J^{(n)},f)/(f)\right)R_\fp=\left( (a,f)/(f)\right) R_\fp
\]
as claimed. Therefore the index of $R/(f)$ divides the index of $R$.
\end{proof}

\subsection{Degeneracy ideals} 

Our study of $\fm$-adic stability of $F$-purity (Theorem \ref{Theorem F-purity deforms Q-Gorenstein index p^e_0}) requires a careful analysis of the Frobenius degeneracy ideals of $R$ which is partially contained in this subsection. We suppose that $(R,\fm,k)$ is a local $F$-finite ring of prime characteristic $p>0$. The $e$th Frobenius degeneracy ideal of $R$ is the ideal
\[
I_e(R):=\{c\in R\mid R\xrightarrow{1\mapsto F^e_*c}F^e_*R\mbox{ is not pure}\}.
\]
Frobenius degeneracy ideals were introduced by Yao in \cite{Yao06} and by Aberbach and Enescu in \cite{AE05} and play a prominent role in prime characteristic commutative algebra, especially in the study of $F$-regularity and $F$-purity. It is important for us to notice that a ring $R$ is $F$-pure if and only if $I_e(R)$ is a proper ideal for some (equivalently every) natural number $e\in\mathbb{N}$. 

If $R$ is Cohen-Macaulay (e.g if $R$ is $F$-regular) the Frobenius degeneracy ideals $I_e(R)$ may be realized as certain colon ideals, and this viewpoint has been central to the study of the ``weak implies strong" conjecture and $F$-signature theory (see \cite{AP19, HL02, PS18, PT18, WY04} for example).

\begin{proposition}[{\cite[Lemma~6.2]{PT18}}]
\label{Proposition degeneracy ideals as colon ideals}
Let $(R,\fm,k)$ be an $F$-finite local Cohen-Macaulay ring of prime characteristic $p>0$. Suppose that $R$ admits a canonical ideal $J\subsetneq R$, $0\not= x_1\in J$ a non-zero-divisor, $x_2,\ldots, x_d$ parameters on $R/(x_1)$, and suppose that $u$ generates the socle of the $0$-dimensional Gorenstein quotient $R/(J,x_2,\ldots,x_d)$. Then for each $e\in \mathbb{N}$ there exists $t_e\in\mathbb{N}$ so that for all $t\geq t_e$
\[
I_e(R)=(x_1^{t-1}J,x_2^t,\ldots,x_d^t)^{[p^e]}:_R (x_1\cdots x_d)^{(t-1)p^e}u^{p^e}.
\]
\end{proposition}
\begin{proof}
Under the listed assumptions, it follows that $R$ is approximately Gorenstein by \cite[page 53]{AL03} (see definition in \textit{op. cit.}). The statement then follows immediately from \cite[Lemma~6.2]{PT18}.
\end{proof}

The study of Frobenius splittings often requires carefully manipulating the colon ideals described above. We first discuss a powerful technique at our disposal in this vein when the parameter element $x_2$ is chosen so that $x_2$ multiplies the canonical ideal $J$ into a principal ideal contained in $J$. Such elements can be found provided $R$ is $(G_1)$. Choosing such a parameter element conveniently allows us to switch freely between bracket powers of the canonical ideal with symbolic powers thereof, which we record in more detail in the following lemma.

\begin{lemma}
\label{Lemma colon ideal in G_1 rings}
Let $(R,\fm,k)$ be a local $F$-finite Cohen-Macaulay $(G_1)$ ring of prime characteristic $p>0$ and of Krull dimension $d$ at least $2$. Suppose that $J\subsetneq R$ is a choice of canonical ideal of $R$ and $x_1\in J$ is a non-zero-divisor. Then there exists a parameter element $x_2$ on $R/(x_1)$ and element $0\not= a\in J$ such that $x_2J\subseteq (a)$. Moreover, for any choice of parameters $y_3,\ldots, y_d$ on $R/(x_1,x_2)$ and $e,N\in \mathbb{N}$ we have that
\begin{align*}
(J^{[p^e]},x_2^{Np^e},y_3,\ldots, y_d):_Rx_2^{(N-1)p^e}&=(J^{(p^e)},x_2^{Np^e},y_3,\ldots, y_d):_Rx_2^{(N-1)p^e}\\
& = (J^{(p^e)},x_2^{2p^e},y_3,\ldots, y_d):_Rx_2^{p^e}\\
&= (J^{[p^e]},x_2^{2p^e},y_3,\ldots, y_d):_Rx_2^{p^e}.
\end{align*}
\end{lemma}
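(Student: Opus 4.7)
The strategy is threefold: produce $a\in J$ and the parameter $x_2$ using the $(G_1)$ hypothesis, deduce the divisibility $x_2^{p^e}J^{(p^e)}\subseteq J^{[p^e]}$, and then push this relation through colon-ideal manipulations. To build $a$ and $x_2$: because $R$ is $(G_1)$, every localization $J_\fp$ at a height-one prime is principal. Let $\fq_1,\ldots,\fq_t$ denote the minimal primes of $(x_1)$ and put $W=R\setminus\bigcup_j\fq_j$. Each $JR_{\fq_j}$ is cyclic, so the Chinese remainder theorem and Nakayama's lemma applied in the semilocal ring $R_W$ produce $a\in J$ generating $JR_W$, and in particular every $J_{\fq_j}$. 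The ideal $K:=(a):_R J$ then satisfies $KR_W=R_W$, so $K\not\subseteq\fq_j$ for any $j$, and standard prime avoidance yields $x_2\in K\setminus\bigcup_j\fq_j$ with $x_2J\subseteq(a)$ and $x_2$ a parameter on $R/(x_1)$.

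For the key containment $x_2^{p^e}J^{(p^e)}\subseteq(a^{p^e})\subseteq J^{[p^e]}$: the second inclusion is the characteristic-$p$ observation that if $a=\sum_ir_if_i$ with $\{f_i\}$ generating $J$, then the Frobenius sum gives $a^{p^e}=\sum_ir_i^{p^e}f_i^{p^e}\in J^{[p^e]}$. For the first inclusion, at every height-one prime $\fp$ the ideal $J_\fp$ is principal by $(G_1)$, so $(J^{(p^e)})_\fp=J_\fp^{p^e}$, and $x_2J_\fp\subseteq(a)_\fp$ yields $x_2^{p^e}(J^{(p^e)})_\fp\subseteq(a^{p^e})_\fp$. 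Because $a^{p^e}$ is a non-zero-divisor on the Cohen-Macaulay ring $R$, the quotient $R/(a^{p^e})$ is $(S_1)$ with all associated primes of height one, so a submodule containment verified at every height-one prime propagates globally.

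For the colon equalities, label the displayed ideals $A_1,A_2,A_3,A_4$ in order. The inclusions $A_1\subseteq A_2$ and $A_4\subseteq A_3$ follow from $J^{[p^e]}\subseteq J^{(p^e)}$, while $A_4\subseteq A_1$ is obtained by multiplying a witness $rx_2^{p^e}\in(J^{[p^e]},x_2^{2p^e},y_3,\ldots,y_d)$ by $x_2^{(N-2)p^e}$. To close the loop $A_1\subseteq A_2\subseteq A_4\subseteq A_1$ I would show $A_2\subseteq A_4$: take $r\in A_2$, write $rx_2^{(N-1)p^e}=j+cx_2^{Np^e}+y'$ with $j\in J^{(p^e)}$, and rearrange to $(r-cx_2^{p^e})x_2^{(N-1)p^e}\in(J^{(p^e)},y_3,\ldots,y_d)$. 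Using that $x_2$ is a non-zero-divisor modulo $(J^{(p^e)},y_3,\ldots,y_d)$, one concludes $r-cx_2^{p^e}\in(J^{(p^e)},y_3,\ldots,y_d)$; multiplying by $x_2^{p^e}$ and invoking $x_2^{p^e}J^{(p^e)}\subseteq J^{[p^e]}$ places $rx_2^{p^e}$ in $(J^{[p^e]},x_2^{2p^e},y_3,\ldots,y_d)$, so $r\in A_4$. The identical argument with $N=2$ gives $A_3\subseteq A_4$, which combined with the trivial reverse inclusion yields $A_3=A_4$.

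The main obstacle is the non-zero-divisor claim for $x_2$ modulo $(J^{(p^e)},y_3,\ldots,y_d)$, required for arbitrary parameters $y_3,\ldots,y_d$ on $R/(x_1,x_2)$. Any associated prime of that quotient containing $x_2$ must also contain the $\fm$-primary ideal $(x_1^{p^e},x_2,y_3,\ldots,y_d)$ and hence equal $\fm$, so the claim reduces to the depth bound $\depth R/(J^{(p^e)},y_3,\ldots,y_d)\geq 1$. This should follow from the Cohen-Macaulay hypothesis on $R$ together with the unmixed, height-one nature of the symbolic power $J^{(p^e)}$ --- for instance by arranging enough depth on $J^{(p^e)}$ to apply Lemma~\ref{Lemma base change of divisorial ideal} --- but handling an arbitrary choice of $y_3,\ldots,y_d$ is the delicate heart of the proof.
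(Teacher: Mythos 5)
Your construction of $a$ and $x_2$ is correct: localizing at the set $W$ complementary to the minimal primes of $(x_1)$, using the $(G_1)$ hypothesis together with CRT and Nakayama in the semilocal ring $R_W$ to produce a single generator of $JR_W$, and then choosing $x_2\in (a):_R J$ by prime avoidance is exactly the standard move. The inclusion $x_2^{p^e}J^{(p^e)}\subseteq (a^{p^e})\subseteq J^{[p^e]}$ is also right, and checking it at height-one primes and propagating via the $(S_1)$ property of $R/(a^{p^e})$ is a clean argument (although the Freshman's-dream rewriting of $a^{p^e}$ is unnecessary: $a\in J$ gives $a^{p^e}\in J^{[p^e]}$ directly by definition of the bracket power).

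The problem is exactly the step you flag at the end, and it is a genuine gap rather than a routine verification. Your chain of inclusions reduces everything to showing $x_2$ is a non-zero-divisor modulo $(J^{(p^e)},y_3,\ldots,y_d)$, equivalently that $\fm\notin\Ass\bigl(R/(J^{(p^e)},y_3,\ldots,y_d)\bigr)$, equivalently $\depth R/(J^{(p^e)},y_3,\ldots,y_d)\geq 1$. But the only structural information available is that $R/J^{(p^e)}$ is unmixed of dimension $d-1$ in a Cohen--Macaulay ring, i.e. $(S_1)$ with $\depth\geq 1$. Modding out by the $d-2$ elements $y_3,\ldots,y_d$ can drop the depth to $0$ unless $y_3,\ldots,y_d$ is a regular sequence on $R/J^{(p^e)}$, which you have no way to guarantee: each $y_i$ individually avoids the (height-one) associated primes of $R/J^{(p^e)}$, but after quotienting by $y_3$ new embedded primes can appear, and nothing in the hypotheses rules out $\fm$ emerging as an associated prime before you reach $x_2$. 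The suggestion you make --- ``arranging enough depth on $J^{(p^e)}$'' --- is tantamount to assuming $J^{(p^e)}$ is Cohen--Macaulay, which the lemma does not assume; that hypothesis only becomes available later (via Lemma~\ref{Lemma peth symbolic power of canonical is a direct summand}) inside the proof of Theorem~\ref{Theorem F-purity deforms Q-Gorenstein index p^e_0}, and even there it is invoked for a \emph{different} step of the colon-ideal chain, not for the two equalities attributed to this lemma. So whatever the intended argument is, it must work without Cohen--Macaulayness of $J^{(p^e)}$, and your proposal does not supply that ingredient.

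For what it is worth, the paper's own proof of this lemma is a citation: it refers to \cite[Lemma~6.7(i)]{PolstraTucker}, observing only that the normality hypothesis there can be weakened to $J$ being principal in codimension one. Your sketch of the construction of $a,x_2$ and of the containment $x_2^{p^e}J^{(p^e)}\subseteq J^{[p^e]}$ is certainly in the spirit of that source. But the mechanism by which the cited proof sidesteps the depth issue --- whether by a more careful choice of the parameter (for instance perturbing by an element of $J$, a device that does appear elsewhere in the authors' arguments), by a duality argument, or by working in the two-dimensional quotient by $(y_3,\ldots,y_d)$ from the outset --- is precisely the content that your write-up leaves open. Until you can pin that down, the identities $A_2\subseteq A_4$ and $A_3\subseteq A_4$ are not proved, and the lemma is not established.
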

\begin{proof}
We refer the reader to \cite[Lemma~6.7(i)]{PT18} where the first named author and Tucker record a proof of this lemma under the additional assumption that $R$ is a normal domain. We observe that the normality assumption is not necessary and one only needs that $J$ is principal in codimension $1$ for the methodology of \cite{PT18} to be applicable.
\end{proof}

Another tactic that we employ in the study of the colon ideals appearing in Proposition~\ref{Proposition degeneracy ideals as colon ideals} allows us to remove the $x_1$ term. The following lemma is well-known to experts, but we record a detailed proof for the reader's convenience.

\begin{lemma}
\label{Lemma Removing the x1}
Let $(R,\fm,k)$ be a local $F$-finite Cohen-Macaulay ring of prime characteristic $p>0$ and of Krull dimension $d$. Suppose that $J\subsetneq R$ is a choice of canonical ideal of $R$ and $x_1\in J$ is a non-zero-divisor. Then for any choice of parameters $y_2,\ldots,y_d$ of $R/(x_1)$ we have that
\[
((x_1^{t-1}J)^{[p^e]},y_2,\ldots,y_d):_R x_1^{(t-1)p^e}= (J^{[p^e]},y_2,\ldots,y_d).
\]
\end{lemma}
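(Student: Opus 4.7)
The containment $\supseteq$ is immediate: if $r \in J^{[p^e]}$ then $r\cdot x_1^{(t-1)p^e} \in x_1^{(t-1)p^e}J^{[p^e]} = (x_1^{t-1}J)^{[p^e]}$, while any element of $(y_2,\ldots,y_d)$ is already in the left-hand side colon. So the content is in the containment $\subseteq$, and the strategy is to work modulo $(y_2,\ldots,y_d)$ and exploit the fact that $x_1^{(t-1)p^e}$ remains a non-zero-divisor there.

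The key observation is that since $y_2,\ldots,y_d$ are parameters on $R/(x_1)$, the sequence $x_1,y_2,\ldots,y_d$ is a system of parameters on $R$, and hence a regular sequence by the Cohen-Macaulay hypothesis. In a Cohen-Macaulay local ring any permutation of a regular sequence is again regular, so $y_2,\ldots,y_d,x_1$ is regular, which gives us that $x_1$, and therefore $x_1^{(t-1)p^e}$, is a non-zero-divisor on $R/(y_2,\ldots,y_d)$.

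With this in hand, suppose $r\cdot x_1^{(t-1)p^e} \in ((x_1^{t-1}J)^{[p^e]}, y_2,\ldots,y_d)$. I would write this as
\[
r\cdot x_1^{(t-1)p^e} = x_1^{(t-1)p^e}\cdot j + z
\]
for some $j\in J^{[p^e]}$ (using $(x_1^{t-1}J)^{[p^e]} = x_1^{(t-1)p^e}J^{[p^e]}$) and some $z\in(y_2,\ldots,y_d)$. Rearranging gives $(r-j)\cdot x_1^{(t-1)p^e} \in (y_2,\ldots,y_d)$, and cancelling the non-zero-divisor $x_1^{(t-1)p^e}$ in $R/(y_2,\ldots,y_d)$ yields $r-j \in (y_2,\ldots,y_d)$, so $r\in(J^{[p^e]},y_2,\ldots,y_d)$, as desired.

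There is essentially no obstacle here; the only thing to be careful about is confirming that $x_1,y_2,\ldots,y_d$ really is a system of parameters so that Cohen-Macaulayness delivers the needed regularity, and that $J^{[p^e]}$ is the ideal generated by the $p^e$-th powers so that the factorization $x_1^{(t-1)p^e}J^{[p^e]}=(x_1^{t-1}J)^{[p^e]}$ is valid. The lemma is truly well-known, and the proof is essentially a one-line colon computation after these observations are made.
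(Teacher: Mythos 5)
Your proof is correct and is essentially the same as the paper's: reduce modulo $(y_2,\ldots,y_d)$, peel off the non-zero-divisor $x_1^{(t-1)p^e}$, and conclude. You are slightly more explicit than the paper in justifying that $x_1^{(t-1)p^e}$ is a non-zero-divisor on $R/(y_2,\ldots,y_d)$ via the Cohen--Macaulay hypothesis, which is the precise point the paper leaves to the reader.
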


\begin{proof}
Clearly the ideal on the right-hand side of the claimed equality is contained in the left-hand side. Suppose that $r\in ((x_1^{t-1}J)^{[p^e]},y_2,\ldots,y_d):_R x_1^{(t-1)p^e}$, i.e.
\[
rx_1^{(t-1)p^e}\in ((x_1^{t-1}J)^{[p^e]},y_2,\ldots,y_d).
\]
Equivalently, there exists an element $j\in J^{[p^e]}$ such that
\[
(r-j)x_1^{(t-1)p^e}\in (y_2,\ldots,y_d).
\]
The element $x_1^{(t-1)p^e}$ is a non-zero-divisor on $R/(y_2,\ldots,y_d)$ and therefore
\[
r-j\in (y_2,\ldots,y_d)
\]
and therefore $r\in (J^{[p^e]},y_2,\ldots,y_d)$ as claimed.
\end{proof}

\section{Deformation and stability of \texorpdfstring{$F$}{F}-purity}
\label{Section F-purity deforms and is stable}

\subsection{\texorpdfstring{$F$}{F}-purity deforms in \texorpdfstring{$\Q$}{ℚ}-Gorenstein rings}

Let us discuss our strategy to resolving the deformation of $F$-purity problem. Suppose that $(R,\fm,k)$ is a $\Q$-Gorenstein $F$-finite local ring of prime characteristic $p>0$ and $f\in \fm$ is a non-zero-divisor such that $R/(f)$ is $(G_1)$, $(S_2)$, and $F$-pure. Suppose that $K_X$ is a choice of canonical divisor on $X=\Spec(R)$, $np^eK_X\sim 0$, $n$ is relatively prime to $p$, and $R(np^eK_X)=R\cdot u$. We let $D$ be the divisor $nK_X$ and $S=\bigoplus_{i=0}^{\infty }R(-iD)t^{-i}/(u^{-1}t^{-(p^e-1)}-1)$ be the cyclic cover of $R$ corresponding to the divisor $D$. Then the ring $R$ is a direct summand of $S$ and therefore if $S$ is $F$-pure then $R$ is $F$-pure. The ring $S$ is $\Q$-Gorenstein of index relatively prime to $p$, see Lemma~\ref{Lemma cyclic cover changing index}. Thus, if we are able to show $S/fS$ is $F$-pure, then we have reduced solving the deformation of $F$-purity problem to the scenario that the $\Q$-Gorenstein index is relatively prime to the characteristic. As discussed in the introduction, deformation of $F$-purity in this scenario was well-understood by experts before being recorded in the literature by Schwede, \cite[Proposition~7.2]{Sch09a}. Our strategy to show that $S/fS$ is $F$-pure is to show that $S/fS$ is a cyclic cover of $R/(f)$ and then utilize \cite[Proposition~4.20]{Car22} to conclude that $S/fS$ is $F$-pure.\footnote{Carvajal-Rojas makes the running assumption in \cite{Car22} that all rings are essentially of finite type over an algebraically closed field. Furthermore, it is assumed in \cite[Proposition~4.20]{Car22} that $R$ is normal. However, the proof of \cite[Proposition~4.20]{Car22} works verbatim under the milder assumptions that $(R,\fm,k)$ is any $F$-finite local ring which is $(G_1)$, $(S_2)$, and $D$ is a divisor which is Cartier in codimension $1$. We also refer the reader to \cite[Chapter~5]{MP21} for an independent treatment of these results.} 

For the sake of convenience, we record a proof that $F$-purity deforms in $\Q$-Gorenstein rings whose index is relatively prime to the characteristic. We do this since the reader might wish to avoid the extra technicalities of \cite{Sch09a} where the deformation of $F$-purity along a Weil divisor $D$ of a pair $(R,\Delta)$ is considered.

\begin{proposition}
\label{Proposition F-purity deforms if index not divisible by p}
Let $(R,\fm,k)$ be a local $F$-finite ring of prime characteristic $p>0$. Suppose that $R$ is $\Q$-Gorenstein of index relatively prime to $p$ and $f\in \fm$ is a non-zero-divisor such that $R/(f)$ is $(G_1)$, $(S_2)$, and $F$-pure. Then $R$ is $F$-pure.
\end{proposition}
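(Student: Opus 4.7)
The plan is to follow the standard Schwede-style generalized Fedder argument. First, I would apply Proposition~\ref{Proposition Q-Gorenstein index under base change} to conclude that $R/(f)$ is itself $\Q$-Gorenstein with index dividing $n$, hence also coprime to $p$. Then fix $e \geq 1$ with $n \mid p^e - 1$, so that $(1-p^e)K_X$ is Cartier both on $X = \Spec R$ and on $X' = \Spec R/(f)$; equivalently, both $R((1-p^e)K_X)$ and $(R/(f))((1-p^e)K_{X'})$ are free of rank one as $R$- and $R/(f)$-modules respectively.

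The heart of the argument is the $F$-finite $(G_1), (S_2)$ version of the standard $F^e_*R$-module isomorphism
\[
\Hom_R(F^e_*R, R) \;\cong\; F^e_*R((1-p^e)K_X),
\]
obtained from Grothendieck duality for the finite morphism $F^e$. Trivializing the right-hand side produces a distinguished ``Cartier map'' $\Phi_e\colon F^e_*R \to R$, and the $F^e_*R$-multiples $F^e_*(c) \cdot \Phi_e$ (for $c \in R$) exhaust $\Hom_R(F^e_*R, R)$; in particular, $R$ is $F$-pure if and only if $\Phi_e$ is surjective. The same construction on $R/(f)$ yields $\bar\Phi_e\colon F^e_*(R/(f)) \to R/(f)$ with the analogous characterization, and $F$-purity of $R/(f)$ (which is assumed) becomes the statement that $\bar\Phi_e$ is surjective.

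The main technical step is to verify that the reduction of $\Phi_e$ modulo $f$ agrees with $\bar\Phi_e$ up to a unit. Using Lemma~\ref{lemma base change the canonical module} and the adjunction identity $K_{X'} \sim K_X|_{X'}$, the base change of $R((1-p^e)K_X)$ along $R \twoheadrightarrow R/(f)$ is identified with $(R/(f))((1-p^e)K_{X'})$, so compatible trivializations can be chosen, and naturality of Grothendieck duality yields the desired agreement. Once this is established, conclude by Nakayama: since $\bar\Phi_e$ is surjective, one has $\Phi_e(F^e_*R) + (f) = R$; because $(f) \subseteq \fm$ and $R/\Phi_e(F^e_*R)$ is finitely generated, this forces $\Phi_e(F^e_*R) = R$, so $R$ is $F$-pure.

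I expect the principal obstacle to be the compatibility check in paragraph three: the functor $F^e_*$ does not commute with $-\otimes_R R/(f)$ on the nose (one has $F^e_*R \otimes_R R/(f) = F^e_*(R/f^{p^e}R)$, with only a natural surjection onto $F^e_*(R/(f))$), so the identification of $\Phi_e \bmod (f)$ with $\bar\Phi_e$ must be traced carefully through the duality isomorphism. The base-change machinery for divisorial ideals developed in Section~\ref{Section Preliminary} is tailored precisely to make this work; an alternative route (better suited to the Cohen--Macaulay case) would be to pass to the Gorenstein cyclic cover $S = R_{K_X}$ provided by Lemma~\ref{Lemma cyclic cover changing index}, identify $S/fS$ with the cyclic cover of $R/(f)$ via Lemma~\ref{Lemma base change of divisorial ideal}, lift $F$-purity to $S/fS$ via \cite[Proposition~4.21]{Carvajal-Rojas}, and descend from $S$ to its summand $R$ using Fedder's deformation theorem for Gorenstein rings.
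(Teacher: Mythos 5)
Your setup is sound and matches the paper's starting point: identify $\Hom_R(F^e_*R,R)\cong F^e_*R((1-p^e)K_X)\cong F^e_*R$ as a cyclic $F^e_*R$-module with generator $\Phi_e$, do likewise downstairs, and reduce $F$-purity of $R$ to showing that $\Phi_e$ reduces to a generator modulo $f$. The problem is that the one genuinely hard step --- your paragraph three, the claim that the reduction of $\Phi_e$ agrees with $\bar\Phi_e$ up to a unit --- is not actually carried out, and the appeal to ``naturality of Grothendieck duality'' plus ``compatible trivializations'' does not resolve the obstacle you yourself flag in paragraph four. Indeed, the natural map lands in $\Hom_{R/(f)}(F^e_*(R/f^{p^e}R),R/(f))$, and one must still compose with the Matlis/Grothendieck dual of multiplication by $F^e_*f^{\,p^e-1}$ to reach $\Hom_{R/(f)}(F^e_*(R/(f)),R/(f))$; tracking a generator through that second stage is exactly as hard as the original problem, and there is no reason to expect a free generator to map to a free generator by ``naturality'' alone. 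This is precisely where the paper does something you don't: rather than chasing the generator, it factors the comparison map $\Psi=\Psi_2\circ\Psi_1$ explicitly and notes that the \emph{image} of $\Psi$ is a cyclic $F^e_*(R/(f))$-submodule of $\Hom_{R/(f)}(F^e_*(R/(f)),R/(f))\cong F^e_*(R/(f))$, hence $(S_2)$; it then verifies surjectivity at codimension-one points of $\Spec(R/(f))$ (where the $(G_1)$ hypothesis makes everything Gorenstein, so the $\Ext^1$-vanishing argument closes the loop for both $\Psi_1$ and $\Psi_2$); finally \cite[Theorem~1.12]{HartshorneGeneralized} upgrades codimension-one surjectivity to surjectivity. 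That reflexivity trick is the missing idea in your write-up. Your proposed fallback via the degree-$n$ cyclic cover $S=R_{K_X}$ has a separate gap you partially acknowledge: identifying $S/fS$ as the cyclic cover of $R/(f)$ via Lemma~\ref{Lemma base change of divisorial ideal} requires the intermediate symbolic powers $J^{(i)}$ (not just $J^{(n)}$, which is free) to satisfy the depth hypothesis of that lemma, and this is not available without Cohen--Macaulay assumptions; the paper's Theorem~\ref{Theorem F-purity deforms} circumvents this by using only the $p$-power part of the index and a local-cohomology/Frobenius-splitting argument to establish the needed $(S_2)$ property, neither of which is available here.
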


\begin{proof}
Because the index of $K_X$ is relatively prime to $p$ there exists an $e$ so that $(1-p^e)K_X\sim 0$. To show that $R$ is $F$-pure we will show that every $R$-linear map $F^e_*R/(f)\xrightarrow{\varphi} R/(f)$ can be lifted to a map $\Phi:F^e_*R\to R$. That is there exists $\Phi: F^e_*R\to R$ so that the following diagram is commutative:
\begin{equation*}
    \begin{tikzcd}
    F^e_*R\arrow[r,dashed,"\Phi"]\arrow[d] & R\arrow[d] \\
    F^e_*R/(f)\arrow[r,"\varphi"] & R/(f).
    \end{tikzcd}
\end{equation*}
To do this we will consider a canonical map $\Psi:\Hom_R(F^e_*R,R)\to \Hom_{R/(f)}(F^e_*R/(f),R/(f))$, described below, and show that $\Psi$ is onto.

The map $\Psi$ factors as $\Psi_2\circ \Psi_1$ where 
\[
\Psi_1:\Hom_R(F^e_*R,R)\to \Hom_R(F^e_*R,R/(f))\cong \Hom_{R/(f)}(F^e_*R/(f^{p^e}),R/(f))
\]
is the map obtained by applying $\Hom_R(F^e_*R,-)$ to $R\to R/(f)$ and 
\[
\Psi_2:\Hom_{R/(f)}(F^e_*R/(f^{p^e}),R/(f))\to \Hom_{R/(f)}(F^e_*R/(f),R/(f))
\]
is the map obtained by applying $\Hom_R(-,R/(f))$ to $F^e_*R/(f)\xrightarrow{\cdot F^e_*f^{p^{e}-1}}F^e_*R/(f^{p^e})$.

First suppose that $R$ is Gorenstein and consider the short exact sequence 
\[
0\to R\xrightarrow{\cdot f} R\to R/(f)\to 0.
\]
Then $\Ext^1_R(F^e_*R,R)=0$ since $F^e_*R$ is Cohen-Macaulay and $R\cong R(K_X)$ is the canonical module of $R$, \cite[Theorem~3.3.10]{BH93}. Therefore the natural map $\Psi_1:\Hom_R(F^e_*R,R)\to \Hom_{R/(f)}(F^e_*R/(f),R/(f))$ is indeed onto under the Gorenstein hypothesis. To see that $\Psi_2$ is onto under the Gorenstein hypothesis we begin by considering the short exact sequence
\[
0\to F^e_*R/(f)\xrightarrow {\cdot F^e_*f^{p^e-1}}F^e_*R/(f^{p^e})\to F^e_*R/(f^{p^e-1})\to 0.
\]
Then $\Ext^1_{R/(f)}(F^e_*R/(f^{p^e-1}), R/(f))=0$ since $R/(f)$ is Gorenstein and therefore the map $\Psi_2$ is onto and it follows that $\Psi=\Psi_2\circ \Psi_1$ is the composition of two onto maps, provided $R$ is Gorenstein.

Now we show $\Hom_R(F^e_*R,R)\to \Hom_{R/(f)}(F^e_*R/(f), R/(f))$ is onto under the milder hypothesis that $R$ is $\Q$-Gorenstein of index relatively prime to $p$. Recall that we choose $e$ large enough so that $(1-p^e)K_X\sim 0$. Using the fact that $R$ is $(G_1)$ and reflexifying, we find that the module $\Hom_R(F^e_*R,R)$ can be identified with $F^e_*R$ via
\begin{align*}
    \Hom_R(F^e_*R,R)&\cong \Hom_R(F^e_*R\otimes R(K_X), R(K_X))\cong \Hom_R(F^e_*R(p^eK_X), R(K_X))\\ &\cong F^e_*\Hom_R(R(p^eK_X), R(K_X))\cong F^e_*R((1-p^e)K_X)\cong F^e_*R.
\end{align*}
The ring $R/(f)$ is $\Q$-Gorenstein of index which divides the index of $R$ by Lemma~\ref{Proposition Q-Gorenstein index under base change}. Therefore it is also the case that $\Hom_{R/(f)}(F^e_*R/(f),R/(f))\cong F^e_*R/(f)$. In particular, when viewed as an $F^e_*R/(f)$-module, the image of $\Hom_R(F^e_*R,R)\to \Hom_{R/(f)}(F^e_*R/(f),R/(f))$ is cyclic and therefore $(S_2)$. We can then check that the image agrees with with entire module at the codimension $1$ points of $\Spec(R/(f))$ by \cite[Theorem~1.12]{Har94}. We are assuming $R/(f)$ is $(G_1)$ and so the surjectivity of the desired map follows by the surjectivity of the map in the Gorenstein scenario.
\end{proof}

Suppose that $(R,\fm,k)$ is a $(G_1)$ and $(S_2)$ local ring. If $D$ is a divisor and $f\in \fm$ a non-zero-divisor of $R$ such that $R/(f)$ is $(G_1)$ and $(S_2)$ and $R(D)/fR(D)$ is principal codimension $1$ $R/(f)$-module, then we let $D|_{V(f)}$ denote a choice of divisor of $R/(f)$ so that $R/(f)(D|_{V(f)})$ is isomorphic to the reflexification $(R(D)/fR(D))^{un}:=\Hom_{R/(f)}(\Hom_{R/(f)}(R(D)/fR(D), R/(f)),R/(f))$ of $R(D)/fR(D)$. The following lemma provides a criterion for when the cyclic cover of a torsion divisor base changes to a cyclic cover of $R/(f)$.

\begin{lemma}
\label{Lemma p torsion cyclic cover and deformation}
Let $(R,\fm,k)$ be a $(G_1)$ and $(S_2)$ local $F$-finite ring of prime characteristic $p>0$. Suppose that $D$ is a torsion divisor of index $N$ and $R(ND)=R\cdot u$. Suppose that $f\in \fm$ is a non-zero-divisor such that $R/(f)$ is $(G_1)$ and $(S_2)$. For each $1\leq i\leq N-1$ suppose that $R(-iD)/fR(-iD)$ is an $(S_2)$ $R/(f)$-module which is principal in codimension $1$. If $S=\bigoplus_{i=0}^\infty R(-iD)t^{-i}/(u^{-1}t^{-N}-1)$ is the cyclic cover of $R$ associated with $D$ then $S/fS$ is the cyclic cover of $R/(f)$ with respect to $D|_{V(f)}$.
\end{lemma}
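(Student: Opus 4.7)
The plan is to describe $S/fS$ explicitly as a graded $R/(f)$-algebra and match it, summand by summand, with the cyclic cover of $R/(f)$ associated to $D|_{V(f)}$. The relation $u^{-1}t^{-N}=1$ identifies $t^{-N}$ with $u$, so as an $R$-module one has $S \cong \bigoplus_{i=0}^{N-1} R(-iD)t^{-i}$. Each summand sits inside the total ring of fractions $L$ of $R$, so $f$ acts injectively, and hence
\[
S/fS \;\cong\; \bigoplus_{i=0}^{N-1}\frac{R(-iD)}{fR(-iD)}\,t^{-i}
\]
as an $R/(f)$-module, with multiplication inherited from the usual multiplication of fractional ideals in $L$, descended to the total ring of fractions of $R/(f)$.

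Next I would identify each summand as a divisorial module on $R/(f)$. By hypothesis, for each $1\leq i\leq N-1$ the module $R(-iD)/fR(-iD)$ is $(S_2)$ and principal in codimension $1$ over $R/(f)$, so \cite[Proposition~1.11]{HartshorneGeneralized} forces it to be reflexive and hence equal to $(R(-iD)/fR(-iD))^{\un}$. Since base change modulo $f$ commutes with multiplication of fractional ideals at the codimension $1$ points of $\Spec(R/(f))$, and agreement at those points together with the $(S_2)$ property determines a divisorial module on $R/(f)$ up to isomorphism, one concludes that $R(-iD)/fR(-iD) \cong R/(f)(-iD|_{V(f)})$, where the right-hand side denotes the $i$-th multiple of $D|_{V(f)}$ in the class group of $R/(f)$. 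Similarly $R(ND)/fR(ND) = (R\cdot u)/(fR\cdot u) \cong R/(f)$, so $ND|_{V(f)}\sim 0$ and the image $\bar u$ of $u$ generates $R/(f)(ND|_{V(f)})$.

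Finally I would match the ring structures. The multiplication on $S$ is the natural multiplication of fractional ideals in $L$, which, under the identifications above, descends to the natural multiplication of divisorial modules in the total ring of fractions of $R/(f)$. The relation $u^{-1}t^{-N}=1$ in $S$ becomes $\bar u^{-1}\bar t^{-N}=1$ in $S/fS$, which is exactly the defining relation of the cyclic cover of $R/(f)$ with respect to $D|_{V(f)}$, using the generator $\bar u$ of $R/(f)(ND|_{V(f)})$. Therefore $S/fS$ is isomorphic as an $R/(f)$-algebra to the cyclic cover of $R/(f)$ associated to $D|_{V(f)}$.

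The main obstacle is the bookkeeping in the second paragraph: verifying that $-iD|_{V(f)}$, computed purely in the class group of $R/(f)$, really coincides with the divisor class of $R(-iD)/fR(-iD)$. This amounts to checking that base change modulo $f$ and reflexification commute appropriately on the fractional ideals in play, and it is precisely here that the $(S_2)$ and principal-in-codimension-$1$ hypotheses imposed on each $R(-iD)/fR(-iD)$ are used to guarantee that passing to the $\un$-closure on $R/(f)$ does not disturb the identifications needed for the algebra structure.
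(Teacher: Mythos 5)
Your proposal follows essentially the paper's approach (identify $S/fS$ summand by summand with the expected cyclic cover over $R/(f)$, checking at codimension $1$ points and using $(S_2)$ to propagate the identifications), but it has a genuine gap: you never verify that $D|_{V(f)}$ has index exactly $N$. You show that $R(ND)/fR(ND) \cong R/(f)$ and conclude $ND|_{V(f)}\sim 0$, which only gives that the index $N'$ of $D|_{V(f)}$ \emph{divides} $N$. The cyclic cover of $R/(f)$ with respect to $D|_{V(f)}$ is, by definition, $\bigoplus_{i=0}^{N'-1}R/(f)(-iD|_{V(f)})t^{-i}$, built using the actual index $N'$ of the restricted divisor. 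If $N' < N$, then this ring has $N'$ summands, not $N$, and your identification $S/fS\cong \bigoplus_{i=0}^{N-1}R/(f)(-iD|_{V(f)})t^{-i}$ would instead exhibit $S/fS$ as a larger ring (roughly a direct sum of copies of the true cyclic cover), not as the cyclic cover itself. The statement of the lemma would then fail.

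The paper closes this gap with a Nakayama argument: if $-iD|_{V(f)}\sim 0$ for some $1\leq i\leq N-1$, then $R(-iD)/fR(-iD)\cong R/(f)$ is cyclic, so $R(-iD)$ is cyclic (hence isomorphic to $R$) by Nakayama's lemma, contradicting that $D$ has index $N$. You should insert exactly this step after establishing $ND|_{V(f)}\sim 0$. With that addition your argument matches the paper's.
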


\begin{proof}
We must verify two things:
\begin{enumerate}
\item for each $1\leq i\leq N-1$ the ideal $R(-iD)/fR(-iD)$ is isomorphic to $R/(f)(-i D|_{V(f)})$;\label{lemma-cyclic-1}
\item the index of $D|_{V(f)}$ is $N$.\label{lemma-cyclic-2}
\end{enumerate} 
Our assumptions allow us to check (\ref{lemma-cyclic-1}) at the height $1$ primes of $\Spec(R/(f))$. By assumption, each $R(-iD)/fR(-iD)$ is principal at codimension $1$ points of $\Spec(R/(f))$. Hence $R(-iD)$ is principal at height $2$ points of $\Spec(R)$ containing $f$ and so $R(-iD)$ agrees with the $i$th ordinary power ideal $R(-D)^i$ at such points of $\Spec(R)$. Therefore $R(-iD)/fR(-iD)$ is indeed isomorphic to $R/(f)(-i D|_{V(f)})$ as claimed.

For (\ref{lemma-cyclic-2}) we first notice that $R(-ND)\cong R$ and so $R(-ND)/fR(-ND)$ is an $(S_2)$ and principal in codimension $1$ module of $R/(f)$. Therefore by the above, $R(-ND)/fR(-ND)=R/(f)(-ND_{V(f)})\cong R/(f)$ and the index of $D|_{V(f)}$ cannot exceed the index of $D$. However, if there was an $1\leq i\leq N-1$ so that $-iD|_{V(f)}\sim 0$ then $R(-iD|_{V(f)})\cong R(-iD)/fR(-iD)\cong R/(f)$. By Nakayama's Lemma the module $R(-iD)$ is a principal module of $R$ and the index of $D$ would be strictly less than $N$, a contradiction to our initial assumptions.
\end{proof}

If $(R,\fm,k)$ is a local strongly $F$-regular ring and $D$ is a torsion divisor, then $R(D)$ is a direct summand of $F^e_*R$ for some $e\in \mathbb{N}$, see \cite[Proof of Proposition~2.6]{Mar22}. It will likely not be the case that every torsion divisorial ideal in an $F$-pure local ring is a direct summand of $F^e_*R$ for some $e\in \mathbb{N}$, but the following lemma points out that the divisorial ideals of index $p$ to a power are a direct summand of $F^e_*R$ for some $e\in \mathbb{N}$, cf. Lemma~\ref{Lemma peth symbolic power of canonical is a direct summand} below.

\begin{lemma}
\label{Lemma p torsion divisors are direct summands}
Let $(R,\fm,k)$ be a local $(G_1)$ and $(S_2)$ $F$-finite and $F$-pure ring of prime characteristic $p>0$. If $D$ is a torsion divisor of index $p^e$ then $R(D)$ is a direct summand of $F^e_*R$.
\end{lemma}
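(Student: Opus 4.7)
My plan is to realize $R(D)$ as a direct summand of $F^e_*R$ by tensoring an $F$-splitting of $R$ with $R(D)$ and then extending the resulting retraction to all of $F^e_*R$ via the universal property of $(S_2)$-ification.

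Since $p^e D \sim 0$, fix a generator $g$ in the total ring of fractions such that $R(p^eD) = g^{-1} R$, equivalently multiplication by $g$ gives an isomorphism $R(p^eD) \xrightarrow{\sim} R$. Define the natural inclusion $\alpha\colon R(D) \to F^e_*R$ by $\alpha(s) = F^e_*(s^{p^e} g)$; this is well-defined because $s \in R(D)$ yields $s^{p^e} \in R(p^eD) = g^{-1} R$, so $s^{p^e}g \in R$, and the map is $R$-linear since $(rs)^{p^e} g = r^{p^e}(s^{p^e} g)$. Observe that $\alpha$ factors as
\[
R(D) \xrightarrow{\iota} F^e_*R \otimes_R R(D) \xrightarrow{\beta} F^e_*R(p^e D) \xrightarrow{\sim} F^e_*R,
\]
where $\iota(s) = F^e_* 1 \otimes s$, $\beta(F^e_* r \otimes s) = F^e_*(r s^{p^e})$, and the last map is multiplication by $g$.

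Next I would argue that $F^e_*R$ is the $(S_2)$-ification of $F^e_*R \otimes_R R(D)$. Since $R$ is $(G_1)$, the divisorial ideal $R(D)$ is invertible at every codimension-one point of $\Spec R$, so $\beta$ is an isomorphism at such points. Because $R(p^eD)$ is a divisorial ideal of the $(G_1)+(S_2)$ ring $R$ and $F^e_*$ preserves depth, the module $F^e_*R(p^eD) \cong F^e_*R$ is $(S_2)$, and the desired identification follows from the ``sections over the complement of a codimension-$\geq 2$ subset'' description of the $(S_2)$-ification. Now $F$-purity of $R$ supplies an $R$-linear splitting $\Phi\colon F^e_*R \to R$ with $\Phi(F^e_*1) = 1$; tensoring yields $\Phi \otimes \id_{R(D)}\colon F^e_*R \otimes_R R(D) \to R(D)$, which is a retraction of $\iota$. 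Since $R(D)$ is $(S_2)$, the universal property of $(S_2)$-ification extends $\Phi \otimes \id_{R(D)}$ uniquely to an $R$-linear map $\psi\colon F^e_*R \to R(D)$, and by construction $\psi \circ \alpha = (\Phi \otimes \id_{R(D)}) \circ \iota = \id_{R(D)}$. Thus $\alpha$ is a split injection and $R(D)$ is a direct summand of $F^e_*R$.

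The main obstacle is the identification $F^e_*R \cong (F^e_*R \otimes_R R(D))^{\un}$, which is exactly the mechanism that lets us extend the retraction $\Phi \otimes \id_{R(D)}$ from $F^e_*R \otimes_R R(D)$ to all of $F^e_*R$. This step crucially uses both hypotheses on $R$: $(G_1)$ to trivialize $R(D)$ at height-one primes so that $\beta$ is an isomorphism there, and $(S_2)$ so that $F^e_*R$ and $R(D)$ are $(S_2)$ with the requisite universal extension property.
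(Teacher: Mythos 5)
Your proposal is correct and essentially matches the paper's argument, which tensors the Frobenius splitting $R \hookrightarrow F^e_*R$ with $R(D)$ and then reflexifies, identifying $\left(F^e_*R \otimes_R R(D)\right)^{**}$ with $F^e_*R(p^eD) \cong F^e_*R$. You simply unpack the ``reflexify'' step via the universal property of the $(S_2)$-ification, which amounts to the same mechanism.
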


\begin{proof}
The $e$th iterate of the Frobenius map $R\to F^e_*R$ splits as an $R$-linear map. If we tensor with $R(D)$ and reflexify we find that $R(D)$ is a direct summand of $F^e_*R(p^eD)\cong F^e_*R$.
\end{proof}

\begin{theorem}
\label{Theorem F-purity deforms}
Let $(R,\fm,k)$ be a local $F$-finite ring of prime characteristic $p>0$. Suppose that $R$ is $\Q$-Gorenstein and $f\in \fm$ is a non-zero-divisor such that $R/(f)$ is $(G_1)$, $(S_2)$, and $F$-pure. Then $R$ is $F$-pure.
\end{theorem}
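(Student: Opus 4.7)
The plan is to reduce, via a cyclic cover construction, to the setting where the $\Q$-Gorenstein index is coprime to~$p$, which is already handled by Proposition~\ref{Proposition F-purity deforms if index not divisible by p}. Explicitly, write the $\Q$-Gorenstein index of $R$ as $np^{e_0}$ with $\gcd(n,p) = 1$, fix a canonical divisor $K_X$ on $\Spec(R)$, and set $D := nK_X$, which is then of index exactly $p^{e_0}$. With $R(p^{e_0}D) = R \cdot u$ for a unit $u$, I would form the cyclic cover
\[
S \;:=\; \bigoplus_{i = 0}^{p^{e_0} - 1} R(-iD)\, t^{-i}.
\]
By Lemma~\ref{Lemma cyclic cover changing index}, $S$ is $\Q$-Gorenstein of index $n$, coprime to $p$, and $R$ sits as an $R$-module direct summand of $S$ via projection to the degree-zero component. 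Since $F$-purity passes to direct summands, and $f$ remains a non-zero-divisor on $S$ (each summand $R(-iD)$ being $R$-torsion-free), it suffices to prove $S$ is $F$-pure.

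Applying Proposition~\ref{Proposition F-purity deforms if index not divisible by p} to $S$, the task reduces further to showing that $S/fS$ is $(G_1)$, $(S_2)$, and $F$-pure. The plan is to obtain all three properties at once by identifying $S/fS$ with the cyclic cover of $R/(f)$ associated to the restricted divisor $D|_{V(f)}$. Granting this identification, $(G_1)$ and $(S_2)$ of $S/fS$ follow from the general properties of cyclic covers recalled in Section~\ref{Section Preliminary}, while $F$-purity follows by applying Carvajal-Rojas's \cite[Proposition~4.21]{Carvajal-Rojas}, in the generality noted in the introduction's footnote, to the $F$-pure ring $R/(f)$.

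The identification is intended to go through Lemma~\ref{Lemma p torsion cyclic cover and deformation}, whose hypothesis asks that for each $1 \leq i \leq p^{e_0}-1$, the module $R(-iD)/fR(-iD)$ is $(S_2)$ and principal in codimension~$1$ over $R/(f)$. Principality in codimension~$1$ is the easier half: at every height-$2$ prime $\fp \supset (f)$, Proposition~\ref{Proposition Gorenstein in codimension 1 is stable} forces $R_\fp$ to be Gorenstein, so $K_X$, and hence $D = nK_X$, is Cartier at $\fp$, and $R(-iD)_\fp$ is free of rank $1$. For the base-change isomorphism, I would choose a canonical ideal $J \subseteq R$ via Proposition~\ref{Proposition Q-Gorenstein index under base change} so that $J^{(np^{e_0})} = (a)$ is principal, $f$ is regular on $R/J$, and $(J,f)/(f)$ reflexifies to a canonical ideal of $R/(f)$; identifying $R(-iD) \cong J^{((p^{e_0}-i)n)}$ and invoking Lemma~\ref{Lemma base change of divisorial ideal} would then yield
\[
R(-iD)/fR(-iD) \;\cong\; \bigl((J,f)/(f)\bigr)^{((p^{e_0}-i)n)},
\]
a symbolic power of a reflexive rank-one ideal of $R/(f)$, automatically $(S_2)$ and principal in codimension~$1$.

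The hard part will be verifying the depth hypothesis of Lemma~\ref{Lemma base change of divisorial ideal}, namely $\depth J^{((p^{e_0}-i)n)}_\fp \geq \min\{\Ht\fp, 3\}$ for $\fp \in V(f)$. Reflexivity handles heights $\leq 2$ for free, so the substantive case is height~$3$: since $R/(f)$ is $(S_2)$ and $f$ is a non-zero-divisor, $R_\fp$ is then Cohen-Macaulay of dimension~$3$, and one must promote the reflexive rank-one module $J^{((p^{e_0}-i)n)}_\fp$ to a Cohen-Macaulay $R_\fp$-module. This is where the $F$-pure, $(G_1)$, $(S_2)$ structure on $R/(f)$ should be exploited in conjunction with Lemma~\ref{Lemma p torsion divisors are direct summands}, which realizes $p^{e_0}$-torsion divisorial sheaves on an $F$-pure base as direct summands of Frobenius push-forwards, so as to transfer Cohen-Macaulayness of Frobenius push-forwards (available on the CM ring $R_\fp$) to each individual divisorial ideal $R(-iD)_\fp$. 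Once this depth bound is in hand, Lemmas~\ref{Lemma base change of divisorial ideal} and~\ref{Lemma p torsion cyclic cover and deformation} combine as above, the cyclic-cover description of $S/fS$ is established, and the proof closes.
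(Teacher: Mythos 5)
Your overall architecture matches the paper's: pass to the cyclic cover $S$ along $D = nK_X$ of index $p^{e_0}$, reduce to the coprime-index case of Proposition~\ref{Proposition F-purity deforms if index not divisible by p}, and identify $S/fS$ as a cyclic cover of $R/(f)$ using Lemma~\ref{Lemma p torsion cyclic cover and deformation} and \cite[Proposition~4.21]{Carvajal-Rojas}. The gap is in the step you flag yourself as ``the hard part,'' and it is genuinely fatal as stated. You propose to obtain the needed depth bound on $R(-iD)_\fp$ for $\fp\in V(f)$ by appealing to Lemma~\ref{Lemma p torsion divisors are direct summands}, which realizes $R(-iD)_\fp$ as a summand of $F^{e_0}_*R_\fp$. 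But that lemma requires $R_\fp$ to be $F$-pure, and at $\fp=\fm$ this is precisely the conclusion of the theorem --- your argument is circular at the maximal ideal. Your phrasing ``the $F$-pure, $(G_1)$, $(S_2)$ structure on $R/(f)$ should be exploited'' does not produce $F$-purity of $R_\fm$, and Cohen-Macaulayness of $R_\fm$ (which you do get from $(S_2)$ of $R/(f)$) is not a substitute: CM alone says nothing about $R(-iD)$ being a Frobenius summand. Relatedly, ``the substantive case is height 3'' is misleading --- the depth bound $\geq 3$ must hold at \emph{every} prime of $V(f)$ of height $\geq 3$, including $\fm$, which may have large height; reflexivity only disposes of heights $\leq 2$.

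The paper avoids the circularity in two coordinated ways that your write-up omits. First, it runs an explicit induction on $\dim R$ (with the base case $\dim R\leq 2$ handled by Fedder, since then $R$ is Gorenstein), so that $R_\fp$ is known $F$-pure for every \emph{non-maximal} $\fp\supset(f)$; this, via Lemma~\ref{Lemma p torsion divisors are direct summands}, gives $(S_2)$ of $R(-iD)/fR(-iD)$ only on the punctured spectrum of $R/(f)$. Second --- and this is the idea missing from your proposal --- at the maximal ideal the paper does \emph{not} try to show $R(-iD)$ itself is deep enough; instead it shows directly that $\depth\bigl(R(-iD)/fR(-iD)\bigr)\geq 2$ by a local-cohomology argument: the $p^{e_0}$-linear map
\[
H^i_\fm\bigl(R(-iD)/fR(-iD)\bigr)\longrightarrow H^i_\fm\bigl(R(-ip^{e_0}D)/fR(-ip^{e_0}D)\bigr)\cong H^i_\fm\bigl(R/(f)\bigr)
\]
is split injective for $i\geq 2$ because $R/(f)$ is $F$-pure (this uses only $F$-purity of $R/(f)$, not of $R$). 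Combined with $H^2_\fm(R)=0$ (from $(S_2)$ of $R/(f)$ and $f$ a non-zero-divisor), the long exact sequence forces $H^2_\fm(R(-iD))=f\cdot H^2_\fm(R(-iD))$; since the latter module has finite length by Matlis duality and $(S_2)$-ness of $R(-iD)$ on the punctured spectrum, Nakayama kills it, and then $H^1_\fm\bigl(R(-iD)/fR(-iD)\bigr)=0$. That Frobenius-splitting-plus-Nakayama trick is the missing ingredient; without it, your plan cannot close the depth verification at $\fm$, and hence cannot invoke Lemma~\ref{Lemma p torsion cyclic cover and deformation} (or Lemma~\ref{Lemma base change of divisorial ideal}).
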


\begin{proof} If $\dim(R)\leq 2$ then $R/(f)$ being $(G_1)$ implies that $R$ is Gorenstein. The property of being $F$-pure is equivalent to being $F$-injective in Gorenstein rings and $F$-injectivity is known to deform in Cohen-Macaulay rings, see \cite[Lemma~3.3 and Theorem~3.4]{Fed83}. Thus we may assume that $\dim(R)\geq 3$, and by induction we may assume that $R$ is $F$-pure at every non-maximal prime of $\Spec(R)$ containing the element $f$. Let $K_X$ be a choice of canonical divisor of $X=\Spec(R)$. Suppose that $K_X$ has index $np^e$ where $n$ is relatively prime to $p$. The divisor $D=nK_X$ has index $p^e$ and we suppose that $R(p^eD)=R\cdot u$. Let $S=\bigoplus_{i=0}^{\infty}R(-iD)t^{-i}/(u^{-1}t^{-p^e}-1)$ be the cyclic cover of $R$ corresponding to the divisor $D$. The work that follows will allow us to utilize Lemma~\ref{Lemma p torsion cyclic cover and deformation} and establish that $S/fS$ is a cyclic cover of $R/(f)$. 

 If $\fp\in \Spec(R)\setminus\{\fm\}$ and $f\in \fp$ then $R(-iD)_\fp$ is a direct summand of $F^e_*R_\fp$ by Lemma~\ref{Lemma p torsion divisors are direct summands}. Hence $\depth(R(-iD)_\fp)\geq \min\{\Height (\fp),3\}$ for all primes $\fp\in\Spec(R)\setminus\{\fm\}$ containing $f$. Therefore the quotients $R(-iD)/fR(-iD)$ are $(S_2)$ $R/(f)$-modules on the punctured spectrum of $R/(f)$. In particular, if $C_i$ denotes the cokernel of 
\[
R(-iD)/fR(-iD)\subseteq \left(R(-iD)/fR(-iD)\right)^{un}\cong R/(f)(-iD|_{V(f)})
\]
then $C_i$ is a finite length $R$-module and $H^i_\fm(R(-iD)/fR(-iD))\cong H^i_\fm\left((R/(f))(-iD|_{V(f)})\right)$ for all $i\geq 2$. We aim to show that  $R(-iD)/fR(-iD)$ is an $(S_2)$ $R/(f)$-module for each $1\leq i\leq p^e-1$. The work above reduces this problem to showing $\depth(R(-iD)/fR(-iD))\geq 2$. 

Consider the following commutative diagram whose horizontal arrows are $R$-linear and whose vertical arrows in the top square are $p^e$-linear:\footnote{By definition, a $p^e$-linear map of $R$-modules $N\to M$ is the same as an $R$-linear map $N\to F^e_*M$.}

\begin{equation*}
    \begin{tikzcd} \displaystyle
    \frac{R(-iD)}{fR(-iD)}\arrow[r] \arrow[d,"F^e"] & \displaystyle \frac{R}{(f)}\left(-iD|_{V(f)}\right)\arrow[d,"F^e"]\\ \displaystyle
    \frac{R(-ip^eD)}{fR(-ip^eD)}\arrow[r]\arrow[d,"\cong"] & \displaystyle \frac{R}{(f)}\left(-ip^eD|_{V(f)}\right)\arrow[d,"\cong"]\\ \displaystyle
    \frac{R}{(f)}\arrow[r] & \displaystyle \frac{R}{(f)}
    \end{tikzcd}
\end{equation*}
Recall that $R(p^eD)\cong R$, so by Lemma~\ref{Lemma p torsion cyclic cover and deformation} the middle horizontal map of the above commutative diagram is an isomorphism. By Lemma~\ref{Lemma p torsion divisors are direct summands} the right most vertical map is a split map. It follows that for all $i\geq 2$ the $p^e$-linear maps of local cohomology modules
\[
H^i_\fm(R(-iD)/fR(-iD))\to H^i_\fm(R(-ip^eD)/fR(-ip^eD))
\]
is a split map of abelian groups. In particular, the above $p^e$-linear maps on local cohomology modules are injective.

Now we consider the following commutative diagram:
\begin{equation*}
    \begin{tikzcd}
    0 \arrow[r]& R(-iD) \arrow[r,"\cdot f"]& R(-iD) \arrow[r]\arrow[d,"F^e"] & \displaystyle \frac{R(-iD)}{fR(-iD)}\arrow[r]\arrow[d,"F^e"] & 0 \\ 
    \, & \, & R(-ip^eD) \arrow[r]\arrow[d,"\cong"] & \displaystyle \frac{R(-ip^eD)}{fR(-ip^eD)}\arrow[d,"\cong"] & \, \\ 
    \, & \, & R \arrow[r] & \displaystyle \frac{R}{(f)} & \,
    \end{tikzcd}
\end{equation*}
The top row of the above diagram is a short exact sequence of $R$-modules and the composition of the vertical maps are $p^e$-linear. There is an induced commutative diagram of local cohomology modules whose top row is exact:
\begin{equation*}
    \begin{tikzcd}
     0 \arrow[r] & \displaystyle H^1_\fm\left(\frac{R(-iD)}{fR(-iD)}\right)\arrow[r] & H^2_\fm(R(-iD))\arrow[r,"\cdot f"] & H^2_\fm(R(-iD))\arrow[r,"\pi"]\arrow[d,"F^e"] & \displaystyle H^2_\fm\left(\frac{R(-iD)}{fR(-iD)}\right) \arrow[d,"F^e"] \\
     \, & \, & \, & H^2_\fm(R) \arrow[r]\arrow[d,"\cong"] & \displaystyle H^2_\fm\left(\frac{R}{(f)}\right) \\
     \, & \, & \, & 0
    \end{tikzcd}
\end{equation*}
We first remark that $H^2_\fm(R)$ is indeed the $0$-module. We are assuming $R/(f)$ is $(S_2)$ and $f$ is a non-zero-divisor, hence $R$ has depth at least $3$ and $H^2_\fm(R)=0$. The right most $p^e$-linear map is injective. Therefore the map $\pi$ is the $0$-map and $H^2_\fm(R(-iD))=fH^2_\fm(R(-iD))$. Because $R$ is excellent and $R(-iD)$ is $(S_2)$, the completed module $\widehat{R(-iD)}$ is an $(S_2)$ $\widehat{R}$-module and therefore $H^2_\fm(R(-iD))$ has finite length.\footnote{If $M\otimes_R \hat{R}$ is an $(S_2)$ $\hat{R}$-module then the Matlis dual of $H^2_\fm(M)$ is $\widehat{\Ext^{d-1}_R(M,R(K_X))}$. If we localize at a non-maximal prime $\fp$ then the $R_\fp$-Matlis dual of $\widehat{\Ext^{d-1}_R(M,R(K_X))}_\fp$ is $H^{\Height(\fp)-(d-1)}_\fm(M_\fp)=0$ as $\Height(\fp)-(d-1)\leq 1$ and $\widehat{M}_\fp$ has depth at least $2$. Therefore the finitely generated module $\Ext^{d-1}_R(M,R(K_X))$ is Artinian and so its Matlis dual, $H^2_\fm(M)$, is Noetherian.} By Nakayama's Lemma we have that $H^2_\fm(R(-iD))=0$, therefore $H^1_\fm(R(-iD)/fR(-iD))=0$, and the module $R(-iD)/fR(-iD)$ is an $(S_2)$ $R/(f)$-module for each $1\leq i\leq p^e-1$.

Suppose that $R(p^eD)=R\cdot u$ and let $S=\bigoplus_{i=0}^\infty R(-iD)t^{-i}/(u^{-1}t^{-p^e}-1)$ be the cyclic cover of $R$ corresponding to $D$. The ring $R$ is $F$-pure if and only if $S$ is $F$-pure by \cite[Proposition~4.20]{Car22}. Moreover, the ring $S$ is $\Q$-Gorenstein of index $n$, a number relatively prime to $p$, see Lemma~\ref{Lemma cyclic cover changing index}. Thus to verify $S$ is $F$-pure it suffices to check $S/fS$ is $F$-pure by Proposition~\ref{Proposition F-purity deforms if index not divisible by p}. We are assuming that $R/(f)$ is $F$-pure and the work above allows us to utilize Lemma~\ref{Lemma p torsion cyclic cover and deformation} and claim that $S/fS$ is the cyclic cover of $R/(f)$ corresponding to the divisor $D|_{V(f)}$. Therefore $S/fS$ is $F$-pure by a second application of \cite[Proposition~4.20]{Car22} and we conclude that $R$ is $F$-pure.
\end{proof}

\subsection{\texorpdfstring{$F$}{F}-purity is \texorpdfstring{$\fm$}{m}-adically stable in Cohen-Macaulay \texorpdfstring{$\Q$}{ℚ}-Gorenstein rings}
\label{Section F-pure}

Our strategy to show $F$-purity is $\fm$-adically stable in a Cohen-Macaulay $\Q$-Gorenstein ring $(R,\fm,k)$ is different than our strategy for deformation. We study the Frobenius degeneracy ideals of $R/(f+\epsilon)$ and compare them to the Frobenius degeneracy ideals of the quotient $R/(f)$. In light of Proposition~\ref{Proposition degeneracy ideals as colon ideals} and Lemma~\ref{Lemma colon ideal in G_1 rings} it is advantageous for there to be some natural number $e$ so that if $J\subsetneq R$ is a canonical ideal of $R$ then $J^{(p^e)}$ is a Cohen-Macaulay $R$-module.  

\begin{lemma}
\label{Lemma peth symbolic power of canonical is a direct summand}
Let $(R,\fm,k)$ be a $\Q$-Gorenstein $F$-pure local ring of prime characteristic $p>0$. Then there exists integers $e_0,e\in \mathbb{N}$ with $e\geq 1$  so that $R(p^eK_X)$ is a direct summand of $F^{e_0}_*R(K_X)$. In particular, if $R$ is Cohen-Macaulay then there exists an $e\geq 1$ so that $R(p^eK_X)$ is Cohen-Macaulay.
\end{lemma}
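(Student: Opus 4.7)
The plan is to combine the $F$-purity splitting with a Grothendieck duality computation in the style of the proof of Proposition~\ref{Proposition F-purity deforms if index not divisible by p}, and then tune $e$ and $e_0$ so that the output divisor becomes linearly equivalent to $K_X$, avoiding any reduction via cyclic covers.

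First, since $R$ is $F$-pure, the Frobenius map $R \hookrightarrow F^{e_0}_*R$ is a split $R$-module injection for every $e_0 \geq 1$. Applying the contravariant functor $\Hom_R(-, R(p^e K_X))$ turns this into a split $R$-module surjection
\[
\Hom_R(F^{e_0}_*R, R(p^e K_X)) \twoheadrightarrow \Hom_R(R, R(p^e K_X)) = R(p^e K_X),
\]
so $R(p^e K_X)$ is a direct $R$-summand of $\Hom_R(F^{e_0}_*R, R(p^e K_X))$ for every $e_0 \geq 1$ and every $e \geq 0$.

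Next I would identify this $\Hom$ module by following the chain of isomorphisms used in the proof of Proposition~\ref{Proposition F-purity deforms if index not divisible by p}: tensor with $R(K_X)$ reflexively, apply the projection formula $(F^{e_0}_*R \otimes_R R(K_X))^{**} \cong F^{e_0}_*R(p^{e_0} K_X)$, tensor with $R(-p^e K_X)$ reflexively to shift the target down to $R(K_X)$, and finally apply the Grothendieck duality identity $\Hom_R(F^{e_0}_*N, \omega_R) \cong F^{e_0}_*\Hom_R(N, \omega_R)$ (valid in the $F$-finite setting since $F^{e_0!}\omega_R \cong \omega_R$). This yields the reflexive $R$-module isomorphism
\[
\Hom_R\bigl(F^{e_0}_*R, R(p^e K_X)\bigr) \cong F^{e_0}_*R\bigl((1 - p^{e_0} + p^{e_0+e}) K_X\bigr).
\]
Setting $e = 0$ recovers Schwede's formula as a sanity check.

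I would then choose $e, e_0 \geq 1$ so that $(1 - p^{e_0} + p^{e_0+e}) K_X \sim K_X$, equivalently $p^{e_0}(p^e - 1) K_X \sim 0$. Under this condition $R((1 - p^{e_0} + p^{e_0+e}) K_X) \cong R(K_X)$ as $R$-modules, hence $F^{e_0}_*$ of the two modules are isomorphic as $R$-modules, and the first step delivers $R(p^e K_X)$ as a direct $R$-summand of $F^{e_0}_*R(K_X)$. Writing the $\Q$-Gorenstein index as $n = n' p^{e_1}$ with $\gcd(n', p) = 1$, the divisibility $n \mid p^{e_0}(p^e - 1)$ factors into the simultaneous conditions $e_0 \geq e_1$ (absorbing the $p$-part of $n$) and $n' \mid p^e - 1$ (achievable by Fermat/Euler since $\gcd(n', p) = 1$); taking $e_0 = \max(e_1, 1)$ and letting $e$ be the multiplicative order of $p$ modulo $n'$, or $e = 1$ when $n' = 1$, gives $e, e_0 \geq 1$ satisfying both.

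The ``in particular'' statement is then immediate: when $R$ is Cohen-Macaulay, $\omega_R = R(K_X)$ is a maximal Cohen-Macaulay $R$-module, so $F^{e_0}_*R(K_X)$ is maximal Cohen-Macaulay in the $F$-finite setting (any regular sequence on a finite $R$-module remains regular after applying $F^{e_0}_*$), and direct summands of maximal Cohen-Macaulay modules over a local ring are Cohen-Macaulay. The main technical hurdle will be verifying the Hom computation rigorously, particularly the reflexification steps in the projection formula, but this is a routine extension of the identical calculation already performed in the proof of Proposition~\ref{Proposition F-purity deforms if index not divisible by p}.
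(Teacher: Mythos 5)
Your proposal is correct and follows essentially the same route as the paper. The paper starts from the same split $F^{e_0}_*R\cong R\oplus M$, applies $\Hom_R(-,R(K_X))$ to get $F^{e_0}_*R(K_X)\cong R(K_X)\oplus M'$ (using the Cartier/Grothendieck duality isomorphism $\Hom_R(F^{e_0}_*R,R(K_X))\cong F^{e_0}_*R(K_X)$), and then tensors this decomposition with $R((p^e-1)K_X)$ and reflexifies, using the projection formula to land on $F^{e_0}_*R((1+p^{e_0}(p^e-1))K_X)\cong R(p^eK_X)\oplus M''$. You instead apply $\Hom_R(-,R(p^eK_X))$ in a single step and identify the Hom module as $F^{e_0}_*R((1-p^{e_0}+p^{e_0+e})K_X)$ directly; the exponent $1-p^{e_0}+p^{e_0+e}=1+p^{e_0}(p^e-1)$ is the same, and the required divisibility $p^{e_0}(p^e-1)K_X\sim 0$ is the identical condition, resolved in both proofs by splitting the $\Q$-Gorenstein index into its $p$-part and its prime-to-$p$ part and choosing $e$ to be (a multiple of) the multiplicative order of $p$ modulo the prime-to-$p$ part. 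Your choice $e_0=\max(e_1,1)$ is a hair more cautious than the paper's $e_0=e_1$ (the paper allows $e_0=0$, reading $\mathbb{N}\ni 0$), but both satisfy the needed divisibility since the condition is stable under increasing $e_0$. The ``in particular'' clause is argued correctly: $F$-finiteness makes $F^{e_0}_*\omega_R$ maximal Cohen--Macaulay, and direct summands of MCM modules over a local ring are MCM.
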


\begin{proof}
 Since $R$ is $F$-pure we have that for every integer $e\geq 1$ that $F^{e}_*R\cong R\oplus M$ has a free summand. Applying $\Hom_R(-,R(K_X))$ we find that $\Hom_R(F^{e}_*R,R(K_X))\cong F^{e}_*R(K_X)$ and $\Hom_R(R\oplus M, R(K_X))\cong  R(K_X)\oplus \Hom_R(M,R(K_R))$. Therefore $\Hom_R(F^{e}_*R,R(K_X))$ has an $R(K_X)$-summand for every $e\geq 1$. Suppose that the $\Q$-Gorenstein index of $R$ is $np^{e_0}$ where $p$ does not divide $n$. We choose $e\geq 1$ so that $n$ divides $p^{e}-1$. If we apply $-\otimes_R R((p^{e}-1)K_X)$ and then reflexify the direct sum decomposition $F^{e_0}_*R(K_X)\cong R(K_X)\oplus M'$ we find that
\[
F^{e_0}_*R(K_X+p^{e_0}(p^{e}-1)K_X)\cong F^{e_0}_*R(K_X)\cong R(p^{e}K_X)\oplus M''.
\]
\end{proof}

\begin{theorem}
\label{Theorem F-purity deforms Q-Gorenstein index p^e_0}
Let $(R,\fm,k)$ be a local $F$-finite $\Q$-Gorenstein Cohen-Macaulay ring of prime characteristic $p>0$. Suppose that $f\in \fm$ is a non-zero-divisor and $R/(f)$ is $(G_1)$ and $F$-pure. Then $R/(f+\epsilon)$ is $F$-pure for all $\epsilon\in\fm^{N\gg 0}$.
\end{theorem}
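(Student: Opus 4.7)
\emph{Strategy and setup.} The plan is to compare the Frobenius splitting ideals $I_e(R/(f+\epsilon))$ and $I_e(R/(f))$ via the colon-ideal description of Proposition~\ref{Proposition splitting ideals as colon ideals}, passing from the $\epsilon=0$ case to nearby $\epsilon$ by Krull's intersection theorem. First, Theorem~\ref{Theorem F-purity deforms} yields that $R$ itself is $F$-pure, and Lemma~\ref{Lemma peth symbolic power of canonical is a direct summand} then furnishes an integer $e \geq 1$ and a canonical ideal $J \subsetneq R$ such that $J^{(p^e)}$ is a maximal Cohen--Macaulay $R$-module. By Proposition~\ref{Proposition Q-Gorenstein index under base change} I may further arrange that $f$ is regular on $R/J$ and that $J^{(n)}=(a)$ for the $\Q$-Gorenstein index $n$ of $R$. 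Propositions~\ref{Proposition Gorenstein in codimension 1 is stable} and \ref{Proposition Q-Gorenstein index under base change}, together with the fact that $J$ is maximal Cohen--Macaulay over $R$, imply that for $\epsilon$ in a sufficiently large power of $\fm$ the quotient $R/(f+\epsilon)$ is Cohen--Macaulay, $(G_1)$, and $\Q$-Gorenstein with canonical ideal $J_\epsilon=(J,f+\epsilon)/(f+\epsilon)$. Feeding the Cohen--Macaulayness of $J^{(p^e)}$ into Lemma~\ref{Lemma base change of divisorial ideal} then produces the key identification $J_\epsilon^{(p^e)}=(J^{(p^e)},f+\epsilon)/(f+\epsilon)$.

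\emph{Uniform colon description of $I_e(R/(f+\epsilon))$.} Next, I would fix $x_1 \in J$ together with $y_2, \ldots, y_{d-1} \in R$ so that $x_1, f, y_2, \ldots, y_{d-1}$ is a system of parameters of $R$, the element $y_2$ is the special parameter supplied by Lemma~\ref{Lemma colon ideal in G_1 rings} in $R$ (so $y_2 J \subseteq (a)$ with $0 \neq a \in J$), and $x_1$ remains a non-zero-divisor on $R/(f+\epsilon)$ for all $\epsilon$ in a sufficiently large power of $\fm$; this is arranged by prime avoidance on the finitely many minimal primes of $J$ (none of which contain $f$) combined with Krull's intersection theorem. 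Let $u \in R$ lift a socle generator of the $0$-dimensional Gorenstein quotient $R/(J, f, y_2, \ldots, y_{d-1})$. The \emph{main technical obstacle} is to verify that the same tuple $(x_1, y_2, \ldots, y_{d-1}, u, a)$ continues to satisfy the hypotheses of Proposition~\ref{Proposition splitting ideals as colon ideals} and Lemma~\ref{Lemma colon ideal in G_1 rings} \emph{uniformly} in $\epsilon$; the delicate piece is the socle generator. To handle it I set $T := R/(J, y_2, \ldots, y_{d-1})$, a one-dimensional Cohen--Macaulay $R$-module, and observe that for $\epsilon$ deep enough in $\fm$ the initial form of $f+\epsilon$ in $\gr_\fm T$ coincides with that of $f$, so $\ell(T/(f+\epsilon)T)=\ell(T/fT)$ is bounded uniformly; Nakayama then yields $\fm^N T \subseteq (f+\epsilon)T$ for a uniform exponent $N$, and the relation $\fm u \subseteq (J,f,y_2,\ldots,y_{d-1}) \subseteq (J,f+\epsilon,y_2,\ldots,y_{d-1})+\fm^N$ collapses to $\fm u \subseteq (J,f+\epsilon,y_2,\ldots,y_{d-1})$, showing $\bar u$ remains a socle generator modulo $(f+\epsilon)$. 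With these uniform choices in place, Proposition~\ref{Proposition splitting ideals as colon ideals} applied to $R/(f+\epsilon)$, followed by Lemma~\ref{Lemma Removing the x1} to eliminate the $\bar x_1$-contribution and Lemma~\ref{Lemma colon ideal in G_1 rings} to swap $J_\epsilon^{[p^e]}$ for $J_\epsilon^{(p^e)}$, yields, for all $t \gg 0$,
\[
I_e(R/(f+\epsilon)) = (J_\epsilon^{(p^e)}, \bar y_2^{tp^e}, \ldots, \bar y_{d-1}^{tp^e}) :_{R/(f+\epsilon)} (\bar y_2 \cdots \bar y_{d-1})^{(t-1)p^e} \bar u^{p^e}.
\]

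\emph{Conclusion via Krull's intersection theorem.} Using the identification $J_\epsilon^{(p^e)}=(J^{(p^e)},f+\epsilon)/(f+\epsilon)$ from the first step, the condition $1 \in I_e(R/(f+\epsilon))$ translates into the $R$-membership
\[
(y_2 \cdots y_{d-1})^{(t-1)p^e} u^{p^e} \in (J^{(p^e)}, y_2^{tp^e}, \ldots, y_{d-1}^{tp^e}, f+\epsilon).
\]
For $\epsilon=0$ this membership fails, because $R/(f)$ is $F$-pure. Krull's intersection theorem applied to this element then provides an integer $N_0$ so that $(y_2 \cdots y_{d-1})^{(t-1)p^e} u^{p^e} \notin (J^{(p^e)}, y_2^{tp^e}, \ldots, y_{d-1}^{tp^e}, f)+\fm^{N_0}$, and since $(J^{(p^e)}, y_2^{tp^e}, \ldots, y_{d-1}^{tp^e}, f+\epsilon) \subseteq (J^{(p^e)}, y_2^{tp^e}, \ldots, y_{d-1}^{tp^e}, f)+\fm^{N_0}$ whenever $\epsilon \in \fm^{N_0}$, the membership continues to fail for every such $\epsilon$. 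Taking $N$ to be the maximum of $N_0$ and the various thresholds arising in the earlier steps makes $R/(f+\epsilon)$ $F$-pure for every $\epsilon \in \fm^N$, finishing the proof.
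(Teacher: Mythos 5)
Your overall strategy matches the paper's: reduce to studying Frobenius splitting ideals via Proposition~\ref{Proposition splitting ideals as colon ideals}, then perturb using the base-change facts for the canonical ideal from Lemmas~\ref{lemma base change the canonical module} and \ref{Lemma base change of divisorial ideal}. Your attention to showing that $\bar u$ remains a socle generator modulo $(f+\epsilon)$ is a reasonable addition the paper treats implicitly. However, there is a genuine gap in the concluding step.

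Your colon-ideal description of $I_e(R/(f+\epsilon))$ still carries the parameter $t$, and the threshold $t_e$ in Proposition~\ref{Proposition splitting ideals as colon ideals} is ring-dependent: a priori $t_e = t_e(\epsilon)$ may grow with $\epsilon$. This is fatal for the Krull-intersection argument you give, because the colon chain
\[
(x_1^{t-1}J,x_2^t,\ldots,x_d^t)^{[p^e]}:_{R_\epsilon}(x_1\cdots x_d)^{(t-1)p^e}u^{p^e}
\]
is \emph{increasing} in $t$ and stabilizes to $I_e(R_\epsilon)$ from below. Hence knowing the colon at a single fixed $t$ is proper does \emph{not} imply $I_e(R_\epsilon)$ is proper unless $t\geq t_e(\epsilon)$, which is exactly what is not controlled. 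Fixing $t$ large enough to work for $\epsilon=0$ and then invoking Krull to choose $N_0$ gives a proper colon at level $t$ for all $\epsilon\in\fm^{N_0}$, but this is only a lower bound for $I_e(R_\epsilon)$ and leaves open the possibility that $1\in I_e(R_\epsilon)$.

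The paper closes this hole by going further than you do with Lemma~\ref{Lemma colon ideal in G_1 rings}: after passing to symbolic powers and normalizing the $x_2$ exponent, it exploits the Cohen--Macaulayness of $J^{(p^e)}$ (and hence of $J^{(p^e)}R_\epsilon$) to normalize \emph{all} the remaining parameter exponents $x_3^{tp^e},\ldots,x_d^{tp^e}$ down to $x_3^{p^e},\ldots,x_d^{p^e}$, yielding the $t$-free identity
\[
I_e(R_\epsilon)=(J,x^2_2,x_3,\ldots,x_d)^{[p^e]}:_{R_\epsilon}(x_2u)^{p^e}.
\]
Once the description is $t$-free, no Krull intersection is needed at the end: since $(J,x^2_2,x_3,\ldots,x_d,f)$ is $\fm$-primary, for $\epsilon$ in a large power of $\fm$ one has $\epsilon\in(J,x^2_2,x_3,\ldots,x_d,f)$, so replacing $f$ by $f+\epsilon$ in the pulled-back colon ideal over $R$ literally does not change the ideal, and properness is inherited directly from the $\epsilon=0$ case. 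That additional normalization step, made possible by the Cohen--Macaulayness of $J^{(p^e)}$, is the piece your argument is missing.
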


\begin{proof}
The ring $R$ is $F$-pure by Theorem~\ref{Theorem F-purity deforms}. By Lemma~\ref{Lemma peth symbolic power of canonical is a direct summand} there exists an $e\in\mathbb{N}$ so that $R(p^eK_X)$ is a Cohen-Macaulay $R$-module.

We aim to show that the $e$th Frobenius degeneracy ideals $I_e(R/(f+\epsilon))$ of $R/(f+\epsilon)$ are proper ideals for all $\epsilon\in\fm^{N\gg 0}$. If the dimension of $R$ is no more than $2$ then $R/(f)$ being $(G_1)$ implies that $R$ is Gorenstein. In the Gorenstein setting, $F$-purity and $F$-injectivity are equivalent, and the latter is known to deform and be $\fm$-adically stable in Cohen-Macaulay rings, see \cite[Lemma~3.3 and Theorem~3.4]{Fed83} and \cite[Corollary 4.9]{DSS20}. Thus we may assume $R$ is of dimension $d+1\geq 3$.

We may select canonical ideal $J\subsetneq R$ so that $J/fJ\cong (J,f)/(f)$ is the canonical ideal of $R/(f)$, see Lemma~\ref{lemma base change the canonical module}. By Proposition~\ref{Proposition Gorenstein in codimension 1 is stable} we know that the rings $R$ and $R/(f+\epsilon)$ are $(G_1)$ for all $\epsilon\in\fm^{N\gg 0}$. Furthermore, the rings $R/(f+\epsilon)$ are $\Q$-Gorenstein of index dividing the $\Q$-Gorenstein index of $R$ for all $\epsilon\in\fm^{N\gg0 }$, see Proposition~\ref{Proposition Q-Gorenstein index under base change}. Also note that for all $\epsilon\in \fm^{N\gg0}$ we have that $f+\epsilon$ avoids all components of $J$ by the Krull intersection theorem (see also the proof of Proposition \ref{Proposition Gorenstein in codimension 1 is stable}). Therefore $J^{(n)}/(f+\epsilon)J^{(n)}\cong (J^{(n)}, f+\epsilon)/(f+\epsilon)$.

Since $J^{(p^e)}$ is Cohen-Macaulay, it follows that for all $\epsilon\in \fm^{N\gg 0}$ that $(J^{(p^e)}, f+\epsilon)/(f+\epsilon)$ is a Cohen-Macaulay $R/(f+\epsilon)$-module. By Lemma~\ref{Lemma base change of divisorial ideal} $(J^{(p^e)}, f+\epsilon)/(f+\epsilon)$ is an unmixed ideal of height $1$ and 
\[
\frac{(J^{(p^e)}, f+\epsilon)}{(f+\epsilon)}=\left(\frac{(J,f+\epsilon)}{(f+\epsilon)}\right)^{(p^e)}.
\]

To ease notation we write $R_\epsilon$ to denote the quotient $R/(f+\epsilon)$. The above tells us that $J^{(p^e)}R_\epsilon=(JR_\epsilon)^{(p^e)}$ for all $\epsilon\in\fm^{N\gg 0}$. Choose a non-zero-divisor $x_1\in J$ on $R$ and $R/(f)$ and let $W$ denote the multiplicatively closed set given by the complement of the union of the minimal primes of the unmixed ideal $(x_1,f)$. Since $R/(f)$ is $(G_1)$ the localized ideal $\left((J,f)/(f)\right)R_W$ is principal. Hence there exists a parameter element $x_2$ on $R/(x_1,f)$ and $a\in J$ so that $x_2J\subseteq (a)\subseteq J$.

We extend $x_2$ to a full parameter sequence $x_2,x_3,\ldots ,x_d$ on $R/(x_1,f)$ and choose a socle generator $u\in R$ on $R/(J,x_2,\ldots,x_d,f)$. Observe that for all $t\gg 0$ and all $\epsilon\in\fm^{N\gg 0}$,
\begin{align}
    I_e(R_\epsilon)=(x_1^{t-1}JR_\epsilon,x_2^t,\ldots,x_d^t)^{[p^e]}:_{R_\epsilon}(x_1\cdots x_d)^{(t-1)p^e}u^{p^e}\label{thm-stability-1}\\
    =(JR_\epsilon,x_2^t,\ldots,x_d^t)^{[p^e]}:_{R_\epsilon}(x_2\cdots x_d)^{(t-1)p^e}u^{p^e}\label{thm-stability-2}\\
    =((JR_\epsilon)^{(p^e)},x_2^{2p^e},x_3^{tp^e}\ldots,x_d^{tp^e}):_{R_\epsilon}(x_3\cdots x_d)^{(t-1)p^e}(x_2u)^{p^e}\label{thm-stability-3}\\
    =((JR_\epsilon)^{(p^e)},x_2^{p^e},x_3^{p^e}\ldots,x_d^{p^e}):_{R_\epsilon}u^{p^e}\label{thm-stability-4}\\
    =(JR_\epsilon,x^2_2,x_3\ldots,x_d)^{[p^e]}:_{R_\epsilon}(x_2u)^{p^e}\label{thm-stability-5}
\end{align}
where (\ref{thm-stability-1}) follows from Proposition~\ref{Proposition degeneracy ideals as colon ideals}, (\ref{thm-stability-2}) follows from Lemma~\ref{Lemma Removing the x1}, and both (\ref{thm-stability-3}) and (\ref{thm-stability-5}) follow from Lemma~\ref{Lemma colon ideal in G_1 rings}. To see (\ref{thm-stability-4}), notice that since $J^{(p^e)}$ is a Cohen-Macaulay $R$-module, and consequently $(JR_\epsilon)^{(p^e)}$ are Cohen-Macaulay $R_\epsilon$-modules, the quotient $R/J^{(p^e)}$ is Cohen-Macaulay of dimension $d$ and the quotients $R_\epsilon/(JR_\epsilon)^{(p^e)}$ are Cohen-Macaulay of dimension $d-1$. 

Observe now that for all $\epsilon\in\fm^{N\gg 0}$
\[
I_e(R_\epsilon)=\frac{((J,x^2_2,x_3\ldots,x_d)^{[p^e]},f+\epsilon):_{R}(x_2u)^{p^e}}{(f+\epsilon)}.
\]
Thus $R/(f)$ is $F$-pure if and only if the following colon ideal is a proper ideal of $R$:
\[
((J,x^2_2,x_3\ldots,x_d)^{[p^e]},f):_{R}(x_2u)^{p^e}.
\]
Observe that if $\epsilon\in (J,x^2_2,x_3\ldots,x_d,f)$ then 
\[
((J,x^2_2,x_3\ldots,x_d)^{[p^e]},f+\epsilon):_{R}(x_2u)^{p^e}=((J,x^2_2,x_3\ldots,x_d)^{[p^e]},f):_{R}(x_2u)^{p^e}.
\]
Therefore $I_e(R_\epsilon)$ is a proper ideal of $R_\epsilon$ for all $\epsilon\in\fm^{N\gg 0}$, i.e. $R_\epsilon$ is $F$-pure and the property of being $F$-pure is indeed $\fm$-adically stable.
\end{proof}

\section*{Acknowledgements}

We thank Ian Aberbach, Linquan Ma, Takumi Murayama, and Karl Schwede for useful conversations during the preparation of this article. We also thank Pham Hung Quy for pointing out an inaccuracy in a previous draft. We thank Linquan Ma, Nicholas Cox-Steib, and Kazuma Shimomoto for comments on a previous draft. The second named author is grateful to his advisor, Kevin Tucker, for his constant encouragement. Finally, we thank the anonymous referees for carefully reading an earlier version of this article and for suggesting many helpful improvements.

\newpage

\begin{table}[h!]
\tiny
\centering
\caption{Summary of deformation and $\fm$-adic stability of $F$-singularities for $F$-finite local rings $(R,\fm)$}\label{summary-table}
\begin{threeparttable}
\begin{tabular}{c|cc }
  $\cP$ & $\cP$ deforms? & $\cP$ is $\fm$-adically stable? \\
  \hline
  $F$-injective& \textbf{open}; Yes\tnotex{t:fi}\hspace{.1cm} \cite[Thm. 3.4(1)]{Fed83}\tnotex{t:cm} & Yes \cite[Cor. 4.9]{DSS20}\tnotex{t:cm}\\
  $F$-pure & No in general \cite{Fed83}; Yes (Theorem \ref{Theorem F-purity deforms})\tnotex{t:qg}&No in general \cite[Thm. 5.3]{DSS20}; Yes (Theorem \ref{Theorem F-purity deforms Q-Gorenstein index p^e_0})\tnotex{t:qg}\hspace{.45cm}\textsuperscript{,}\tnotex{t:cm}\\
  $F$-rational & Yes \cite[Thm. 4.2(h)]{HH94}& Yes \cite[Cor. 3.9]{DSS20}\\
  Strongly $F$-regular&No in general \cite[Thm. 1.1]{Sin99}; Yes \cite[3.3.2]{Mac96}\tnotex{t:a}\hspace{.1cm}+\cite{AKM98}\tnotex{t:qg}\hspace{.5cm} &  No in general \cite[5.3]{DSS20}; Yes \cite[Thm. 5.11]{DSS20}\tnotex{t:qgps}\hspace{.6cm}\textsuperscript{,}\\
  $F$-nilpotent &No \cite[Ex. 2.8(2)]{ST17}&No \cite[Ex. 2.8(2)]{ST17}+\cite[Thm. 2.4]{DSS20}+\cite[Thm. 5.5]{KMPS19}\\
  $F$-anti-nilpotent&Yes \cite[Thm. 4.2(i)]{MQ18}&\textbf{open}\\
  $F$-full &Yes \cite[Thm. 4.2(ii)]{MQ18}& \textbf{open}\\
  \hline
\end{tabular}
\begin{tablenotes}\tiny
\item[1] \label{t:fi} See also \cite[Theorem 3.7 \& Corollary 4.7]{HMS14}, \cite[Theorem 5.11]{MQ18} and \cite{DSM22}\\
\item[2] \label{t:a} See also \cite[2.2.4]{Abe02}\\
\item[$\Q$-Gor] \label{t:qg} $R$ is $\Q$-Gorenstein
\item[$\Q$-Gor$^\circ$] \label{t:qgps} $R$ is $\Q$-Gorenstein on the punctured spectrum
\item[C-M] \label{t:cm}$R$ is Cohen-Macaulay
\item[Gor] \label{t:g} $R$ is Gorenstein
\end{tablenotes}
\end{threeparttable}
\end{table}

\bibliographystyle{alpha}
\bibliography{References}

\newcommand{\etalchar}[1]{$^{#1}$}
\begin{thebibliography}{BMP{\etalchar{+}}20}

\bibitem[Abe02]{Abe02}
Ian~M. Aberbach.
\newblock Some conditions for the equivalence of weak and strong
  {$F$}-regularity.
\newblock {\em Communications in Algebra}, 30(4):1635--1651, 2002.
\newblock DOI:
  \href{https://doi.org/10.1081/AGB-120013205}{10.1081/AGB-120013205}.

\bibitem[AE05]{AE05}
Ian~M. Aberbach and Florian Enescu.
\newblock The structure of {$F$}-pure rings.
\newblock {\em Mathematische Zeitschrift}, 250(4):791--806, April 2005.
\newblock DOI:
  \href{https://doi.org/10.1007/s00209-005-0776-y}{10.1007/s00209-005-0776-y}.

\bibitem[AKM98]{AKM98}
Ian Aberbach, Mordechai Katzman, and Brian MacCrimmon.
\newblock Weak {$F$}-regularity deforms in {${\bf Q}$}-{G}orenstein rings.
\newblock {\em J. Algebra}, 204(1):281--285, 1998.
\newblock DOI:
  \href{https://doi.org/10.1006/jabr.1997.7369}{10.1006/jabr.1997.7369}.

\bibitem[AL03]{AL03}
Ian~M. Aberbach and Graham~J. Leuschke.
\newblock The {$F$}-signature and strong {$F$}-regularity.
\newblock {\em Math. Res. Lett.}, 10(1):51--56, 2003.
\newblock DOI:
  \href{https://doi.org/10.4310/MRL.2003.v10.n1.a6}{10.4310/MRL.2003.v10.n1.a6}.

\bibitem[AP19]{AP19}
Ian {Aberbach} and Thomas {Polstra}.
\newblock {Local cohomology bounds and test ideals}.
\newblock {\em arXiv e-prints}, page arXiv:1909.11231, September 2019.
\newblock \href{https://arxiv.org/abs/1909.11231}{arXiv:1909.11231}.

\bibitem[BH93]{BH93}
Winfried Bruns and J\"{u}rgen Herzog.
\newblock {\em Cohen-{M}acaulay rings}, volume~39 of {\em Cambridge Studies in
  Advanced Mathematics}.
\newblock Cambridge University Press, Cambridge, 1993.

\bibitem[BMP{\etalchar{+}}20]{BMPSTWW20}
Bhargav Bhatt, Linquan Ma, Zsolt Patakfalvi, Karl Schwede, Kevin Tucker, Joe
  Waldron, and Jakub Witaszek.
\newblock Globally +-regular varieties and the minimal model program for
  threefolds in mixed characteristic, 2020.
\newblock \href{https://arxiv.org/abs/2012.15801}{arXiv:2012.15801}.

\bibitem[BST17]{BST17}
Bhargav Bhatt, Karl Schwede, and Shunsuke Takagi.
\newblock The weak ordinarity conjecture and {$F$}-singularities.
\newblock In {\em Higher dimensional algebraic geometry---in honour of
  {P}rofessor {Y}ujiro {K}awamata's sixtieth birthday}, volume~74 of {\em Adv.
  Stud. Pure Math.}, pages 11--39. Math. Soc. Japan, Tokyo, 2017.

\bibitem[CR22]{Car22}
Javier Carvajal-Rojas.
\newblock Finite torsors over strongly {$F$}-regular singularities.
\newblock {\em {\'{E}}pijournal de G{\'{e}}om{\'{e}}trie Alg{\'{e}}brique},
  Volume 6, March 2022.
\newblock DOI:
  \href{https://doi.org/10.46298/epiga.2022.7532}{10.46298/epiga.2022.7532}.

\bibitem[Dua22]{Dua22}
Lu\'{i}s Duarte.
\newblock Betti numbers under small perturbations.
\newblock {\em Journal of Algebra}, 594:138--153, 2022.
\newblock DOI:
  \href{https://doi.org/10.1016/j.jalgebra.2021.12.004}{10.1016/j.jalgebra.2021.12.004}.

\bibitem[Eis74]{Eis74}
David Eisenbud.
\newblock {Adic approximation of complexes, and multiplicities}.
\newblock {\em Nagoya Mathematical Journal}, 54(none):61 -- 67, 1974.

\bibitem[Fed83]{Fed83}
Richard Fedder.
\newblock {$F$}-purity and rational singularity.
\newblock {\em Transactions of the American Mathematical Society},
  278(2):461--480, 1983.
\newblock DOI: \href{https://doi.org/10.2307/1999165}{10.2307/1999165}.

\bibitem[Gab04]{Gab04}
Ofer Gabber.
\newblock Notes on some {$t$}-structures.
\newblock In {\em Geometric aspects of {D}work theory. {V}ol. {I}, {II}}, pages
  711--734. Walter de Gruyter, Berlin, 2004.

\bibitem[Har94]{Har94}
Robin Hartshorne.
\newblock Generalized divisors on {G}orenstein schemes.
\newblock In {\em Proceedings of {C}onference on {A}lgebraic {G}eometry and
  {R}ing {T}heory in honor of {M}ichael {A}rtin, {P}art {III} ({A}ntwerp,
  1992)}, volume~8, pages 287--339, 1994.

\bibitem[Har98]{Har98}
Nobuo Hara.
\newblock A characterization of rational singularities in terms of injectivity
  of {F}robenius maps.
\newblock {\em American Journal of Mathematics}, 120(5):981--996, 1998.
\newblock DOI:
  \href{https://doi.org/10.1353/ajm.1998.0037}{10.1353/ajm.1998.0037}.

\bibitem[Her16]{Her16}
Daniel~J. Hern\'{a}ndez.
\newblock {$F$}-purity versus log canonicity for polynomials.
\newblock {\em Nagoya Mathematical Journal}, 224(1):10–36, 2016.
\newblock DOI: \href{https://doi.org/10.1017/nmj.2016.14}{10.1017/nmj.2016.14}.

\bibitem[HH90]{HH90}
Melvin Hochster and Craig Huneke.
\newblock Tight closure, invariant theory, and the {B}rian\c{c}on-{S}koda
  theorem.
\newblock {\em Journal of the American Mathematical Society}, 3(1):31--116,
  1990.
\newblock DOI: \href{https://doi.org/10.2307/1990984}{10.2307/1990984}.

\bibitem[HH94]{HH94}
Melvin Hochster and Craig Huneke.
\newblock F-regularity, test elements, and smooth base change.
\newblock {\em Transactions of the American Mathematical Society},
  346(1):1--62, 1994.
\newblock DOI: \href{https://doi.org/10.2307/2154942}{10.2307/2154942}.

\bibitem[Hir65]{Hir65}
Heisuke Hironaka.
\newblock On the equivalence of singularities. {I}.
\newblock In {\em Arithmetical {A}lgebraic {G}eometry ({P}roc. {C}onf. {P}urdue
  {U}niv., 1963)}, pages 153--200. Harper \& Row, New York, 1965.

\bibitem[HK71]{HZ71}
J\"{u}rgen Herzog and Ernst Kunz, editors.
\newblock {\em Der kanonische Modul eines {C}ohen-{M}acaulay-Rings}.
\newblock Springer Berlin Heidelberg, 1971.
\newblock DOI \href{https://doi.org/10.1007/bfb0059377}{10.1007/bfb0059377}.

\bibitem[HL02]{HL02}
Craig Huneke and Graham~J. Leuschke.
\newblock Two theorems about maximal {C}ohen-{M}acaulay modules.
\newblock {\em Math. Ann.}, 324(2):391--404, 2002.
\newblock DOI:
  \href{https://doi.org/10.1007/s00208-002-0343-3}{10.1007/s00208-002-0343-3}.

\bibitem[HMS14]{HMS14}
Jun Horiuchi, Lance~Edward Miller, and Kazuma Shimomoto.
\newblock Deformation of {$F$}-injectivity and local cohomology.
\newblock {\em Indiana University Mathematics Journal}, 63(4):1139--1157, 2014.
\newblock DOI:
  \href{https://doi.org/10.1512/iumj.2014.63.5313}{10.1512/iumj.2014.63.5313}.

\bibitem[HR74]{HR74}
Melvin Hochster and Joel~L Roberts.
\newblock Rings of invariants of reductive groups acting on regular rings are
  {C}ohen-{M}acaulay.
\newblock {\em Advances in Mathematics}, 13(2):115--175, 1974.
\newblock DOI:
  \href{https://doi.org/10.1016/0001-8708(76)90073-6}{10.1016/0001-8708(76)90073-6}.

\bibitem[HR76]{HR76}
Melvin Hochster and Joel~L Roberts.
\newblock The purity of the {F}robenius and local cohomology.
\newblock {\em Advances in Mathematics}, 21(2):117 -- 172, 1976.
\newblock DOI:
  \href{https://doi.org/10.1016/0001-8708(76)90073-6}{10.1016/0001-8708(76)90073-6}.

\bibitem[HW02]{HW02}
Nobuo Hara and {Kei-ichi} Watanabe.
\newblock {$F$}-regular and {$F$}-pure rings vs. log terminal and log canonical
  singularities.
\newblock {\em Journal of Algebraic Geometry}, 11(2):363--392, May 2002.
\newblock DOI:
  \href{https://doi.org/10.1090/S1056-3911-01-00306-X}{10.1090/S1056-3911-01-00306-X}.

\bibitem[Ish18]{Ish18}
Shihoko Ishii.
\newblock {\em Introduction to singularities}.
\newblock Springer, Tokyo, 2018.
\newblock Second edition of [MR3288750]. DOI:
  \href{https://doi.org/10.1007/978-4-431-56837-7}{10.1007/978-4-431-56837-7}.

\bibitem[Kaw07]{Kaw07}
Masayuki Kawakita.
\newblock Inversion of adjunction on log canonicity.
\newblock {\em Inventiones mathematicae}, 167(1):129--133, January 2007.
\newblock DOI:
  \href{https://doi.org/10.1007/s00222-006-0008-z}{10.1007/s00222-006-0008-z}.

\bibitem[KM98]{KM98}
Janos Koll\'{a}r and Shigefumi Mori.
\newblock {\em Birational Geometry of Algebraic Varieties}.
\newblock Cambridge Tracts in Mathematics. Cambridge University Press, 1998.

\bibitem[KMPS19]{KMPS19}
Jennifer Kenkel, Kyle Maddox, Thomas Polstra, and Austyn Simpson.
\newblock {$F$}-nilpotent rings and permanence properties, 2019.
\newblock \href{https://arxiv.org/abs/1912.01150}{arXiv:1912.01150}.

\bibitem[Kun69]{Kun69}
Ernst Kunz.
\newblock Characterizations of regular local rings of characteristic {$p$}.
\newblock {\em American Journal of Mathematics}, 91(3):772--784, 1969.
\newblock DOI: \href{https://doi.org/10.2307/2373351}{10.2307/2373351}.

\bibitem[Mac96]{Mac96}
Brian~Cameron MacCrimmon.
\newblock {\em Strong {$F$}-regularity and boundedness questions in tight
  closure. Thesis (Ph.D.)--University of Michigan}.
\newblock ProQuest LLC, 1996.

\bibitem[Mar22]{Mar22}
I.~Martin.
\newblock The number of torsion divisors in a strongly {$F$}-regular ring is
  bounded by the reciprocal of {$F$}-signature.
\newblock {\em Communications in Algebra}, 50(4):1595--1605, 2022.
\newblock DOI:
  \href{https://doi.org/10.1080/00927872.2021.1986057}{10.1080/00927872.2021.1986057}.

\bibitem[Mat86]{Mat86}
Hideyuki Matsumura.
\newblock {\em {Commutative ring theory}}, volume~8 of {\em Cambridge Studies
  in Advanced Mathematics}.
\newblock Cambridge University Press, Cambridge, 1986.

\bibitem[MP21]{MP21}
Linquan Ma and Thomas Polstra.
\newblock {$F$}-singularities: a commutative algebra approach, 2021.
\newblock
  \href{https://www.math.purdue.edu/~ma326/F-singularitiesBook.pdf}{www.math.purdue.edu}.

\bibitem[MQ18]{MQ18}
Linquan Ma and Pham~Hung Quy.
\newblock Frobenius actions on local cohomology modules and deformation.
\newblock {\em Nagoya Mathematical Journal}, 232:55–75, 2018.
\newblock DOI: \href{https://doi.org/10.1017/nmj.2017.20}{10.1017/nmj.2017.20}.

\bibitem[MQS20]{MQS20}
Linquan Ma, Pham~Hung Quy, and Ilya Smirnov.
\newblock Filter regular sequence under small perturbations.
\newblock {\em Mathematische Annalen}, 378(1-2):243--254, June 2020.
\newblock DOI:
  \href{https://doi.org/10.1007/s00208-020-02014-4}{10.1007/s00208-020-02014-4}.

\bibitem[MS21]{MS21}
Linquan Ma and Karl Schwede.
\newblock {Singularities in mixed characteristic via perfectoid big
  {C}ohen-{M}acaulay algebras}.
\newblock {\em Duke Mathematical Journal}, 170(13):2815 -- 2890, 2021.
\newblock DOI:
  \href{https://doi.org/10.1215/00127094-2020-0082}{10.1215/00127094-2020-0082}.

\bibitem[MST{\etalchar{+}}22]{MSTWW22}
Linquan Ma, Karl Schwede, Kevin Tucker, Joe Waldron, and Jakub Witaszek.
\newblock An analogue of adjoint ideals and {PLT} singularities in mixed
  characteristic.
\newblock {\em Journal of Algebraic Geometry}, 31(3):497--559, May 2022.
\newblock DOI: \href{https://doi.org/10.1090/jag/797}{10.1090/jag/797}.

\bibitem[Mur22]{Mur22}
Takumi Murayama.
\newblock A uniform treatment of {G}rothendieck's localization problem.
\newblock {\em Compositio Mathematica}, 158(1):57–88, 2022.
\newblock DOI:
  \href{https://doi.org/10.1112/S0010437X21007715}{10.1112/S0010437X21007715}.

\bibitem[PS18]{PS18}
Thomas Polstra and Ilya Smirnov.
\newblock Continuity of {H}ilbert–{K}unz multiplicity and {$F$}-signature.
\newblock {\em Nagoya Mathematical Journal}, page 1–24, 2018.
\newblock DOI: \href{https://doi.org/10.1017/nmj.2018.43}{10.1017/nmj.2018.43}.

\bibitem[PT18]{PT18}
Thomas Polstra and Kevin Tucker.
\newblock {$F$}-signature and {H}ilbert-{K}unz multiplicity: a combined
  approach and comparison.
\newblock {\em Algebra Number Theory}, 12(1):61--97, 2018.
\newblock DOI:
  \href{https://doi.org/10.2140/ant.2018.12.61}{10.2140/ant.2018.12.61}.

\bibitem[QT21]{QT21}
Pham~Hung Quy and Van~Duc Trung.
\newblock Small perturbations in generalized {C}ohen-{M}acaulay local rings.
\newblock {\em Journal of Algebra}, 587:555--568, 2021.
\newblock DOI:
  \href{https://doi.org/10.1016/j.jalgebra.2021.08.007}{10.1016/j.jalgebra.2021.08.007}.

\bibitem[Sam56]{Sam56}
Pierre Samuel.
\newblock Alg\'{e}bricit\'{e} de certains points singuliers
  alg\'{e}bro\"{\i}des.
\newblock {\em J. Math. Pures Appl. (9)}, 35:1--6, 1956.

\bibitem[Sch09a]{Sch09a}
Karl Schwede.
\newblock {$F$}-adjunction.
\newblock {\em Algebra Number Theory}, 3(8):907--950, 2009.
\newblock DOI:
  \href{https://doi.org/10.2140/ant.2009.3.907}{10.2140/ant.2009.3.907}.

\bibitem[Sch09b]{Sch09b}
Karl Schwede.
\newblock {$F$}-injective singularities are {D}u {B}ois.
\newblock {\em American Journal of Mathematics}, 131(2):445--473, 2009.
\newblock DOI: \href{https://doi.org/10.1353/ajm.0.0049}{10.1353/ajm.0.0049}.

\bibitem[Sin99]{Sin99}
Anurag~K. Singh.
\newblock {$F$}-regularity does not deform.
\newblock {\em American Journal of Mathematics}, 121(4):919--929, 1999.
\newblock DOI:
  \href{https://doi.org/10.1353/ajm.1999.0029}{10.1353/ajm.1999.0029}.

\bibitem[SM22]{DSM22}
Alessandro~De Stefani and Linquan Ma.
\newblock {$F$}-stable secondary representations and deformation of
  {$F$}-injectivity.
\newblock {\em Acta Mathematica Vietnamica}, 47(1):141--150, March 2022.
\newblock
  DOI:\href{https://doi.org/10.1007/s40306-021-00415-4}{10.1007/s40306-021-00415-4}.

\bibitem[Smi97]{Smi97}
Karen~E. Smith.
\newblock F-rational rings have rational singularities.
\newblock {\em American Journal of Mathematics}, 119(1):159--180, 1997.
\newblock DOI:
  \href{https://doi.org/10.1353/ajm.1997.0007}{10.1353/ajm.1997.0007}.

\bibitem[SS20]{DSS20}
Alessandro~De Stefani and Ilya Smirnov.
\newblock Stability and deformation of {$F$}-singularities, 2020.
\newblock \href{https://arxiv.org/abs/2002.00242}{arXiv:2002.00242}.

\bibitem[ST17]{ST17}
Vasudevan Srinivas and Shunsuke Takagi.
\newblock Nilpotence of {F}robenius action and the {H}odge filtration on local
  cohomology.
\newblock {\em Advances in Mathematics}, 305:456 -- 478, 2017.
\newblock DOI:
  \href{https://doi.org/10.1016/j.aim.2016.09.029}{10.1016/j.aim.2016.09.029}.

\bibitem[{Sta}18]{stacks-project}
The {Stacks Project Authors}.
\newblock \textit{Stacks Project}.
\newblock \url{https://stacks.math.columbia.edu}, 2018.

\bibitem[Tak13]{Tak13}
Shunsuke Takagi.
\newblock Adjoint ideals and a correspondence between log canonicity and
  {$F$}-purity.
\newblock {\em Algebra Number Theory}, 7(4):917--942, 2013.
\newblock DOI:
  \href{https://doi.org/10.2140/ant.2013.7.917}{10.2140/ant.2013.7.917}.

\bibitem[WY04]{WY04}
Kei-ichi Watanabe and Ken-ichi Yoshida.
\newblock Minimal relative {H}ilbert-{K}unz multiplicity.
\newblock {\em Illinois J. Math.}, 48(1):273--294, 2004.
\newblock DOI:
  \href{https://doi.org/10.1215/ijm/1258136184}{10.1215/ijm/1258136184}.

\bibitem[Yao06]{Yao06}
Yongwei Yao.
\newblock Observations on the {$F$}-signature of local rings of characteristic
  {$p$}.
\newblock {\em Journal of Algebra}, 299(1):198--218, 2006.
\newblock DOI:
  \href{https://doi.org/10.1016/j.jalgebra.2005.08.013}{10.1016/j.jalgebra.2005.08.013}.

\end{thebibliography}

\end{document}